\let\counterwithout\relax
\newtheorem{mth}{Theorem}
\newtheorem*{stat}{Theorem}
\newtheorem{lemm}{Lemma}
\newtheorem{thm}{Theorem}[section]
\newtheorem{lem}[thm]{Lemma}
\newtheorem{prop}[thm]{Proposition}
\theoremstyle{definition}
\theoremstyle{remark}
\newtheorem*{defn}{Definition}
\newtheorem*{rem}{Remark}
\newtheorem*{rems}{Remarks}
\newtheorem*{ex}{Example}
\numberwithin{equation}{section}
\newcommand{\micro}{r_n}
\renewcommand{\d}{{\partial}}
\newcommand{\dbar}{{\bar{\partial}}}
\newcommand{\1}{\mathbf{1}}
\newcommand{\parat}{T}
\newcommand{\Prob}{{\mathbb P}}
\newcommand{\C}{{\mathbb C}}
\newcommand{\Int}{\operatorname{Int}}
\newcommand{\Ordo}{O}
\newcommand{\loc}{\operatorname{loc}}
\newcommand{\R}{{\mathbb R}}
\newcommand{\Z}{{\mathbb Z}}
\renewcommand{\L}{{\mathbb L}}
\newcommand{\bfR}{{\mathbf R}}
\newcommand{\Lap}{\Delta}
\newcommand{\HP}{{\mathbb{C}_+}}
\newcommand{\calF}{{\mathcal F}}
\newcommand{\eqpot}{\check{Q}}
\newcommand{\fii}{{\varphi}}
\newcommand{\config}{\Theta}
\newcommand{\bfK}{{\mathbf{K}}}
\newcommand{\Ham}{H}
\newcommand{\const}{\operatorname{const.}}
\newcommand{\re}{\operatorname{Re}}
\newcommand{\im}{\operatorname{Im}}
\newcommand{\erfc}{\operatorname{erfc}}
\newcommand{\dist}{\operatorname{dist}}
\newcommand{\supp}{\operatorname{supp}}
\newcommand{\eps}{\varepsilon}
\def\labs{\left |}
\def\rabs{\right |}
\def\babs#1{\labs {#1} \rabs}
\begin{document}
\title[Singular boundary points]
{Scaling limits of random normal matrix processes at singular boundary points}

\author{Yacin Ameur}
\author{Nam-Gyu Kang}
\author{Nikolai Makarov}
\author{Aron Wennman}
\address{Yacin Ameur\\ Department of Mathematics\\ Faculty of Science\\ Lund
University\\ P.O. Box 118\\ 221 00 Lund\\ Sweden.}
\email{yacin.ameur@maths.lth.se}
\address{Nam-Gyu Kang\\ School of Mathematics\\ Korea Institute for Advanced Study\\ 85 Hoegiro, Dongdaemun-gu, Seoul 02455\\ Republic of Korea.}
\email{namgyu@kias.re.kr}
\address{Nikolai Makarov\\ Department of Mathematics\\
California Institute of Technology\\
Pasadena\\ CA 91125\\
USA.}
\email{makarov@caltech.edu}
\address{Aron Wennman\\ School of Mathematical Sciences\\ Tel Aviv University\\ Tel Aviv 69978\\ Israel}
\email{aronwennman@tauex.tau.ac.il}

\thanks{Nikolai Makarov was supported by NSF grant no. 1500821.}

\subjclass[2010]{Primary: 60B20. Secondary: 60G55; 30C40; 30D15; 35R09}

\keywords{Random normal matrix; Singular boundary point; Scaling limit; Hard edge.}

\thanks{Nam-Gyu Kang was supported by Samsung Science and Technology Foundation (SSTF-BA1401-01).}

\begin{abstract} We introduce a method for taking microscopic limits of normal matrix ensembles and apply it
to study
the behaviour near certain types of singular points on the boundary of the droplet. Our investigation includes ensembles without restrictions near the boundary,
as well as hard edge ensembles, where the eigenvalues are confined to the droplet.
We establish in both cases existence of new types of determinantal point fields, which differ from those which
can appear at a regular boundary point, or in the bulk.
\end{abstract}

\maketitle

The method of rescaled Ward identities was introduced in \cite{AKM},
where the main focus was on scaling limits of normal matrix eigenvalue ensembles near a regular boundary point of the droplet.
In this note, we apply similar methods and study scaling limits near
\textit{singular} boundary points, which may be cusps, double points, crossing points, and possibly other types.

\begin{figure}[ht]
\begin{center}
\includegraphics[width=.2\textwidth]{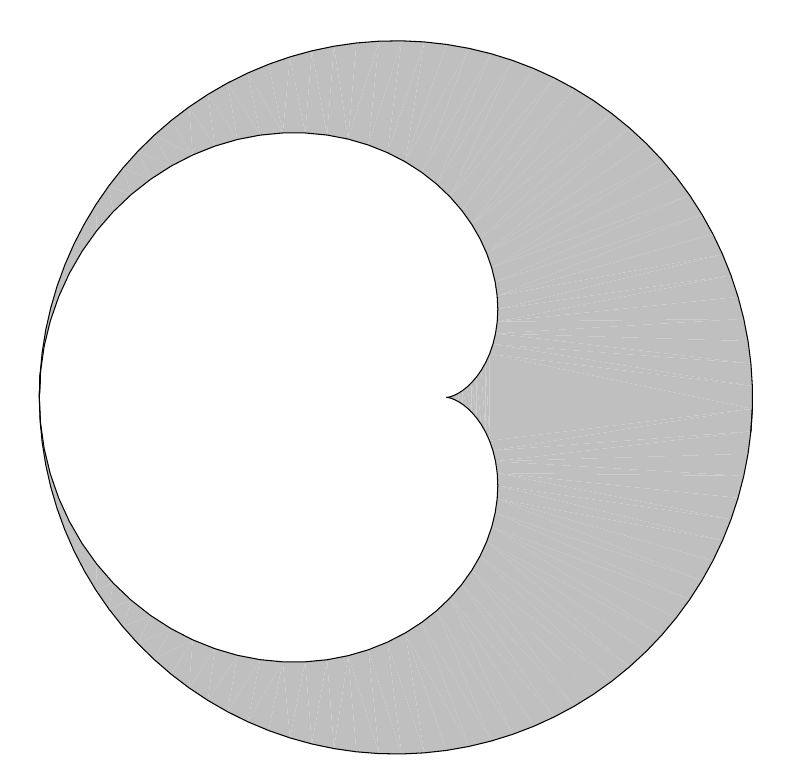}
\hspace{.05\textwidth}
\includegraphics[width=.2\textwidth]{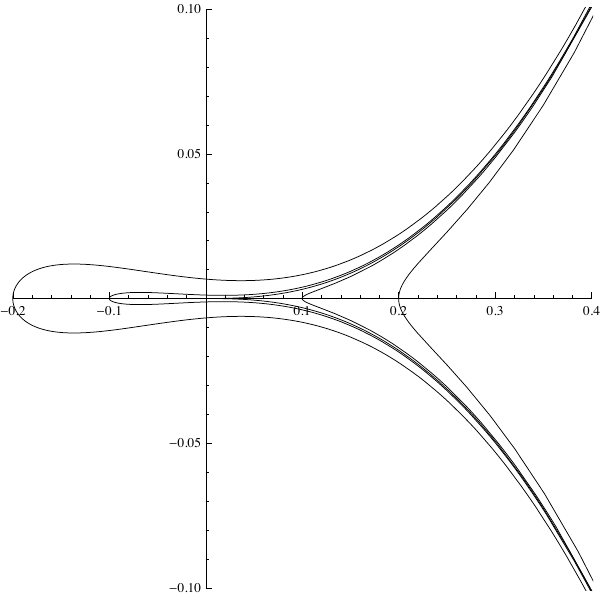}
\end{center}
\caption{The figure on the left shows a boundary with two singular points: one double point and one cusp.
The figure on the right shows a $(5,2)$-cusp embedded in a Hele-Shaw flow of boundaries of droplets.}
\label{fig1}
\end{figure}

In the normal matrix model, we start with a suitable real-valued function $Q$,
which we call the potential. We consider random configurations (or systems) $\{\zeta_j\}_1^n$ of points in $\C$,
having the interpretation of identical point charges subject to the external field
$nQ$. The system is picked with respect to the probability measure $\Prob_n$ on $\C^n$ given by
\begin{equation}\label{E1.1}
d \Prob_n=\frac 1 {Z_n}\,e^{\,-\,\Ham_n}\,d V_n,
\qquad \Ham_n
:=\sum_{j\ne k}^n\log\frac 1 {\babs{\,\zeta_j-\zeta_k\,}}
+n\sum_{j=1}^nQ\left(\zeta_j\right).\end{equation}
Here $dV_n$ denotes Lebesgue measure in $\C^n$ divided by $\pi^n$ and
$Z_n$ is chosen so that $\Prob_n(\C^n)=1$.

In the thermodynamic limit $n\to\infty$, the
system tends to condensate on a compact set $S$ known as the
droplet, the boundary of which is a finite union of real-analytic arcs, possibly containing finitely many singular
points where the arcs meet. We shall here investigate the density of eigenvalues near singular boundary points and in particular establish
existence of new kinds of scaling limits (determinantal point-fields) which emerge by zooming appropriately near the given singular point.

We shall mainly study singular boundary points where the decisive condition $\Lap Q>0$ is satisfied; such points
are either cusps (of certain types) or double points.
For such points,
we shall find nontrivial scaling limits located somewhat inside the droplet,
by zooming about a moving location, which approaches the
singular point at a proper rate, cf. Figure~\ref{fig: moving}. Another type of singularity, a {\em crossing point},
may emerge at a boundary point where $\Lap Q=0$, as in the example of the {\em lemniscate ensemble} \cite{BEG}.
In this case we zoom at the singular point itself, but due to the vanishing of the equilibrium density,
we require a relatively coarse scale in order to recover a nontrivial scaling limit.

In addition, we shall consider scaling limits under {\em hard
edge boundary conditions} (or {\em hard edge confinement}), leading to yet other families of determinantal point fields.

\begin{figure}[ht]
\begin{center}
\includegraphics[height=.2\textwidth]{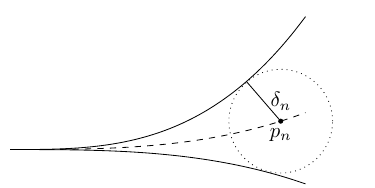}
\includegraphics[height=.18\textwidth]{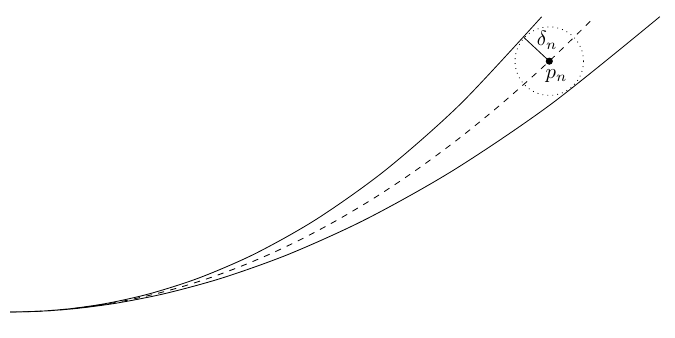}
\end{center}
\caption{A moving point $p_n$ on the (dashed) {\em bisectrix} approaching a cusp of type $(\nu,2)$ with $\nu$ odd (left) and $\nu$ even (right).
To obtain non-trivial scaling limits, we choose $p_n$ so that $\delta_n=T/\sqrt{n\Lap Q}$ where $T>0$ is a parameter.}
\label{fig: moving}
\end{figure}

\subsection*{Notational conventions} We write $D(p;r)$ for the open disc centered at $p$
of radius $r$, $\HP$ for the open upper half-plane $\{\im \zeta>0\}$, and $\C^*$ for the punctured plane
$\C\setminus\{0\}$. If $E$ is a subset of $\C$ we write $E^c=\C\setminus E$ for the complement, while the symbol $\1_E$
will stand for the characteristic function of $E$. We use the notation $\Lap=\d\dbar$, so $\Lap$ is $1/4$ of the usual Laplacian.
We write $dA(z)=-(2\pi i)^{-1}dz\wedge d\bar{z}$ for Lebesgue measure in $\C$,
normalized so that the unit disc has unit area.
A continuous function $f:\C^2\to\C$ is termed \textit{Hermitian} if $f(z,w)=\overline{f(w,z)}$.
We say that $f$ is \textit{Hermitian-analytic} (or \textit{-entire}) if $f$ is Hermitian and analytic (entire) as a function of $z$ and
$\bar{w}$.
A Hermitian function $c$ is called a \textit{cocycle} if
$c(z,w)=g(z)\overline{g(w)}$ for a continuous unimodular function $g$.
We write $\operatorname{Pol}(k)$ for the linear space of analytic polynomials of degree at most $k$.
The symbol $a_n\sim b_n$ denotes that $b_n/a_n\to 1$ as $n\to\infty$, where $a_n,b_n$ are positive numbers.
We use the notation $a_n\gtrsim b_n$ if there exists some constant $c>0$ such that
$a_n\ge c \,b_n$ for all large $n$.
The notions $a_n\lesssim b_n$ and $a_n\asymp b_n$ are defined
analogously.

\section{Introduction and main results}

\subsection{External potential and droplet} Our basic setup is as in \cite{AKM}.
We let $Q:\C\to \R\cup\{\infty\}$ be a suitable, lower semicontinuous, external potential of sufficient growth,
$$
\liminf_{\zeta\to\infty}\frac{Q(\zeta)}{\log|\,\zeta\,|^{\,2}}>1.
$$
We assume also that $Q$ be finite on some open set.

If $\mu$ is a positive, compactly supported Borel measure we define its logarithmic $Q$-energy by
\begin{equation}I_Q[\mu]=\int_\C Q\, d\mu+\int_{\C^2}\log\frac 1 {\babs{\,\zeta-\eta \,}}\,
d\mu(\zeta)\,d\mu(\eta).\end{equation}

We will use the following basic facts of weighted potential theory, cf. \cite{ST}.

Under the conditions given above, there exists a unique \textit{equilibrium measure}
$\sigma$ of unit mass which minimizes $I_Q[\mu]$ over all compactly supported Borel
probability measures $\mu$ on $\C$. The compact support of the measure $\sigma$ is called the
\textit{droplet} in the external field $Q$, and is denoted
$$
S=S[Q]:=\supp\sigma.
$$
If $Q$ is smooth in some neighbourhood of $S$, the measure $\sigma$ is absolutely continuous
and of the form
\begin{equation}\label{sform}
d\sigma(z)=\Lap Q(z)\,\1_S(z)\, dA(z).
\end{equation}
Indeed, we assume in the following that $Q$ is {\em real-analytic} in some neighbourhood of
the boundary $\d S$.

 Note that $\Lap Q\ge 0$ on $S$, since $\Lap Q\cdot \1_S$ has
the meaning of the density of a positive measure.

We will consider boundary points $p\in\d S$ which fall in two categories.

\smallskip

- $p$ is said to be an {\em ordinary} boundary point if $\Lap Q(p)>0$.

- $p$ is said to be a {\em special} boundary point if $\Lap Q(p)=0$.

\smallskip

Ordinary boundary points have been classified by Sakai \cite{HS,Sa}, providing a suitable platform
to study such points in complete generality. To our knowledge, there does not seem to exist a similar classification of special boundary points, and we shall merely
compare with some examples of such points, which emerge naturally
in the recent papers \cite{BEG,GPSS}.

It is convenient here to briefly overview the elements of Sakai's theory which are relevant for our present investigation.

\subsection{Sakai's theory} The famous regularity theorem of Sakai states that if a domain $\Omega\subset\C$ has a (local) Schwarz function at $p\in \d \Omega$, then $p$ is either a regular point, a double point, or a \textit{conformal} cusp of the boundary $\d \Omega$. Conformal cusps can be classified according to degree of tangency, into classes of "$(\nu,2)$-cusps" for $\nu\ge 3$. (See \cite{LM,Sa,Sh}.)

Another important result of Sakai concerns regularity of free boundaries in obstacle problems: if $u\in C^1(\Omega)$, $u\ge 0$, $\Lap u=1$ in $\Omega:=\{u>0\}$ and $0\in\d \Omega$ then $\Omega$ has a local Schwarz function at $0$, and if it is a cusp point then $\nu\not\equiv 3\, (\operatorname{mod} 4)$.
See \cite{Sa2}.

In particular, there are no $(3,2)$-cusps on a free boundary. There are several versions of this statement, e.g. maximality of $(3,2)$-cusps for Hele-Shaw flows. Figures \ref{fig1} and \ref{fig7} illustrate that $(5,2)$-cusps can in fact appear on a free boundary.

A \textit{local} droplet of $Q$ is more general than an ordinary droplet; this kind of droplet is natural in connection with hard edge theory.
If $K$ is a local droplet and $Q$ is real-analytic in a neighbourhood of $\d K$ then $\Omega=K^c$ has a Schwarz function at all boundary points, so Sakai's regularity theorem can be applied. However, we are this time not dealing with a free boundary, and e.g. the deltoid in Fig. \ref{fig: deltoid} is a local (and maximal) droplet which has $(3,2)$-cusps on its boundary.

It is believed that Sakai's classification theorem for cusps should hold if one replaces "$1$" in $\Lap u=1$ by any positive real analytic function, but we are not aware of a detailed proof of this in the literature. A consequence is that if $K$ is a droplet (rather than just a local droplet) of $Q$, and if $Q$ is real-analytic in a neighbourhood of $\d K$, then $(3,2)$-cusps do not appear on the boundary of $K$.

Since this last fact (exclusion of $(3,2)$-cusps) will be crucial in our analysis of free boundary droplets, we will include a self-contained proof, see
Proposition \ref{32prop} below.

\subsection{Ward's equation} We now briefly describe our main tool of Ward identities (or loop equations).
This kind of exact identities are well-known in 1-dimensional random matrix theories and have for example been used to prove Gaussian
field convergence for suitable linear statistics (Johansson's theorem). On the other hand, powerful techniques such as Riemann-Hilbert methods have been successfully applied to study many problems concerning eigenvalue spacing in dimension 1.

A new feature of Ward identities in dimension 2 appears in the present work as well as in the earlier
companion paper \cite{AKM}: if we rescale two-dimensional Ward identities at a natural microscopic scale, somewhat surprisingly we
avoid blow up and obtain equations.
This essentially reduces the question
of establishing universality to a problem of supplying an equation with appropriate side conditions to guarantee uniqueness of
a solution. These side conditions necessarily depend
on the nature of the point we are zooming at, and in particular on its position relative to the droplet.

In the case of scaling limits near singular boundary points, a natural side condition is known as \textit{translation invariance}.
We shall here study point fields with this property by exploiting
the fact that translation invariant solutions to Ward's equation were completely classified in \cite{AKM}.

\subsection{Ordinary boundary points} We now turn to a more detailed description of ordinary boundary points, i.e., points at which $\Lap Q>0$.

The most common type of boundary point is a \textit{regular} point. This is a point $p$
such that there exists a neighbourhood $D=D(p;\epsilon)$ such that $D\setminus S$ is a Jordan domain
and $D\cap(\d S)$ is a simple real-analytic arc. By Sakai's regularity theorem, all but finitely
many boundary points of $S$ are regular. The finitely many exceptional points are called \textit{singular}.

When analyzing a singular point, we can without loss of generality assume that it is located on
the \textit{outer} boundary of $S$, i.e., on the boundary of the unbounded component $U$ of
$\hat{\C}\setminus S$. If there are other boundary components, they can be treated in the
same way.

There are two kinds of ordinary singular boundary points.

A point $p\in\d U$ is called
a (conformal) \textit{cusp} if there is $D=D(p;\epsilon)$ such that $D\setminus S$ is a Jordan domain and
every conformal map $\Phi:\HP\to D\setminus U$ with $\Phi(0)=p$ extends analytically to a neighbourhood
of $0$ and satisfies $\Phi'(0)=0$. We remark that the cusps which appear at an ordinary boundary point of $S$
point {\em out} of $S$. (This follows since the complement $S^c$ is a generalized quadrature domain, see e.g. \cite{Sh0,Sh}.)

The second possibility is that $p$ is a \textit{double point}, i.e., that there is a small enough disc $D$ about $p$ such that
$D\setminus S$ is a union of two Jordan domains, and $p$ is a regular boundary point of each of them.

One can further classify singular points according to degrees of tangency. We shall now briefly recall how this works for cusps.

Assume that $\d S$ has a cusp at the outer boundary $\d U$ at $p=0$. We can assume that a conformal
map $\Phi:\HP\to U$ satisfies $\Phi(0)=0$ and
$$\Phi'(z)=z+a_2z^2+\cdots
+(a_{\nu-1}+ib)z^{\nu-1}+\cdots$$
where $a_j$ and $b$ are real and $b\ne 0$. Then
$$\Phi(z)=\frac 1 2 z^2+\frac {a_2} 3 z^3+\cdots
+\frac {a_{\nu-1}+ib}
{\nu}z^{\nu}+\cdots.$$
If we write $\Phi=u+iv$, we find
\begin{equation}\label{wefind}u(x)=\frac 1 2x^2+\cdots,\quad v(x)=\frac b {\nu}x^{\nu}+\cdots,\quad (x\in\R).\end{equation}
By definition, this means that the cusp at $0$ is of type $(\nu, 2)$.

Some cusps, in particular $(3,2)$-cusps, which are generic in Sakai's theory
for boundaries admitting a Schwarz function, can not appear on a free boundary, at least not at an ordinary point.
However, $(\nu,2)$ cusps for $\nu\ge 4$, $\nu\not\equiv 3 \mod 4$ do appear, and are treated below.

Droplets with ordinary singular boundary points have been studied in the papers
\cite{BBLM,BK,LM,Sa2,T}, the book \cite{Sa3}, and in the thesis \cite{C}.

\subsection{Rescaled ensembles and limiting point-fields}
Let $\{\zeta_j\}_1^n$ be a random sample from the
distribution \eqref{E1.1}.
As in \cite{AKM}, we will denote objects pertinent to this ("non-rescaled") ensemble
by
boldface characters. For example, we denote the $k$-point function by the symbol $\bfR_{n,k}$; this is defined for distinct $\eta_1,\ldots,\eta_k$, by
$$
\bfR_{n,k}\left(\eta_1,\ldots,\eta_k\right)=\lim_{\eps\downarrow 0}\frac1{\eps^{2k}}\,
\Prob_n\big(\{\textrm{There is at least one particle in each disc } N_{D( \eta_j;\eps)}\}\big).
$$

It is well-known that the process with law \eqref{E1.1} is determinantal, i.e., that we have
$$\bfR_{n,k}(\zeta_1,\ldots,\zeta_k)=\det(\bfK_n(\zeta_i,\zeta_j))_{i,j=1}^k,\quad
$$
where $\bfK_n$ is a Hermitian function, which we call a correlation kernel of the process.
More precisely, a correlation kernel $\bfK_n$ may be obtained as the reproducing kernel for the space
of weighted polynomials
$$
w(\zeta)=f(\zeta)e^{-nQ(\zeta)/2},\qquad f\in \operatorname{Pol}(n-1)
$$
endowed with the topology of
$L^2(\mathbb{C},dA)$. (Cf. \cite[Ch. IV.7.2]{ST} or \cite{M} for proofs.)

Now consider a sequence of points $p_n\in S$. We define the {\em microscopic scale} $r_n$
at
$p_n$ to be the smallest number $r_*>0$ such that
\begin{equation}\label{rlap}n\int_{D(p_n;r_*)}\Lap Q\, dA=1.\end{equation} It is easy to see that if $p_n\to p$ where
$\Lap Q(p)>0$, then $r_n\sim 1/\sqrt{n\Lap Q(p)}$ as $n\to\infty$.

\smallskip

In the following, we shall often exploit the freedom to choose an $n$-dependent coordinate system
so that the point $p_n$ remains at the origin for all $n$: $p_n=0$. We are also free to rotate our
coordinate system so that a given direction coincides with, say, the positive real axis.
This is the
\textit{passive} interpretation.
In some instances, we shall prefer to use the \textit{active} interpretation,
where the coordinate system is static while $p_n$ moves around.

\smallskip

Consider now the passive picture $p_n=0$ and define a rescaled point processes $\{z_j\}_1^n$ by
\begin{equation}\label{rescale}
z_j=\micro^{-1}\zeta_j.
\end{equation}
Objects pertaining to the rescaled system $\{z_j\}_1^n$ are denoted by plain symbols.
For example, the $k$-point function of the rescaled system will be written
$$R_{n,k}(z_1,\ldots,z_k):= {\micro^{2k}}\,\bfR_{n,k}(\zeta_1,\ldots,\zeta_k).$$

The rescaled process is likewise determinantal, but having the rescaled correlation kernel
$$K_n(z,w)=\micro^{2}\, \bfK_n(\zeta,\eta),\qquad z=\micro^{-1}\zeta,\qquad  w=\micro^{-1}\eta.$$

Recall that we allow the possibility that $Q=+\infty$ on some portion of $\C$. In general, we define the {\em one particle space} of the process $\{z_j\}_1^n$ to be the
set
$$E=\{Q<+\infty\}.$$

Our goal is to discuss non-trivial limiting point fields $\{z_i\}_1^\infty$ which are subsequential limits of the finite
processes $\{z_i\}_1^n$, along some subsequence $(n_l)_{l=1}^\infty$ of the positive integers.
The precise meaning of such a convergence is that,
for each fixed $k$, we have convergence in $L^1_{\loc}(E^k)$
$$R_{n_l,k}\to \rho_k,\qquad  (l\to\infty),$$
where $\rho_k$ is a function on $\C^k$ (namely the $k$-point function of a limiting point field).

Below we write $R_n=R_{n,1}$ for the $1$-point function of $\{z_j\}_1^n$ and
$R=\rho_1$ for the $1$-point function
of a limiting point field $\{z_j\}_1^\infty$.

\begin{lemm}\label{TH0}
Suppose that $R_{n_l}\to R$ in $L^1_{\loc}(E)$ as $l\to\infty$.
Then there exists a unique determinantal point field $\{z_j\}_1^\infty$ in $E$ with one-point
function $R$, such that $\{z_j\}_1^{n_l}\to \{z_j\}_1^\infty$ in the sense of point fields.
\end{lemm}

\begin{proof} Convergence in the sense of point fields, as well as existence and uniqueness of a scaling limit,
follows from Lenard's theory \cite{L1}-\cite{L3}.
Alternatively, we can
use the
Macchi-Soshnikov theorem (see \cite{S}), since it will be seen below that $R(z)=K(z,z)$
where $K$ is a locally trace class projection kernel.
\end{proof}

\subsection{Special functions} We here list a number of special functions and operations that are
used throughout the paper.

Denote the correlation
kernel of the infinite Ginibre ensemble by
$$
G(z,w):=e^{z\bar{w}-|z|^2/2-|w|^2/2}.
$$
By the \textit{Gaussian kernel} $\gamma$ we mean the entire function
$$
\gamma(z)=\frac 1 {\sqrt{2\pi}}e^{-z^2/2}.
$$
If $\fii$ is a suitable "window function" (tempered distribution) on $\R$, we define the convolution
with $\gamma$ to be the entire function
$$
\Phi(z)=\gamma*\fii(z)=\int_{-\infty}^{+\infty}\fii(t)\gamma(z-t)\, dt.
$$

In particular, choosing $\fii=\1_{(-\infty,0)}$, we obtain the
{\em free boundary function} $F$, which can also be expressed in terms of the complementary error function as follows
\begin{equation}\label{freb}F(z):=\gamma*\1_{(-\infty,0)}(z)=\frac 1 2\erfc\Big(\frac z {\sqrt{2}}\Big).\end{equation}

\subsection{Limiting kernels near ordinary boundary points}
Let us consider first the case of a moving origin $0=0_n\in S$ such that the condition
\begin{equation}\label{for}\liminf_{n\to\infty} \Lap Q(0)\ge \const>0\end{equation}
is satisfied. We consider for the
time being any such moving origin, not necessarily converging to a point of $S$, but having
of course subsequential limits in $S$, at which $\Lap Q>0$.

\smallskip

By definition (see \eqref{rlap}) the microscopic scale
is $r_n=(1+o(1))/\sqrt{n\Lap Q(0)}$. By a slight abuse of notation, we will
neglect the $o(1)$ term here, writing $r_n=1/\sqrt{n\Lap Q(0)}$. Thus (in {\em all} cases when \eqref{for} holds) we shall
rescale by
$$
z_j=\sqrt{n\Lap Q(0)}\,\zeta_j,\qquad j=1,\ldots,n.
$$

\begin{lemm} \label{l0}
\renewcommand{\theenumi}{\roman{enumi}}
\begin{enumerate}[leftmargin=0.27in]
\item \label{l0i} There exists a sequence of cocycles $c_n$ such that every subsequence of the functions $(c_nK_n)$ has a
further subsequence converging locally uniformly on $\C^2$ to a limit $K=G\Psi$, where
$\Psi$ is a Hermitian entire function satisfying the "mass-one inequality"
\begin{equation}\label{moi}\int_\C e^{-|z-w|^2}\babs{\Psi(z,w)}^2\, dA(w)\le \Psi(z,z),\qquad (z\in\C).\end{equation}
\item \label{l0ii} The function $R(z):=K(z,z)=\Psi(z,z)$ is either trivial (i.e., identically zero), or else it is everywhere
strictly positive. Moreover, $R\le 1$ everywhere.
\item \label{l0iii} If $R$ is non-trivial, then Ward's equation holds pointwise on $\C$
\begin{equation}\label{ward}\dbar C=R-1-\Delta\log R,\end{equation}
where
$$C(z):=\int_\C \frac{B(z,w)}{z-w}\, dA(w), \qquad B(z,w):=\frac{\babs{K(z,w)}^2}{R(z)}.$$
\item \label{l0iv} If the moving origin $0_n$ is in the "bulk regime" in the sense that $\sqrt{n}\cdot \dist(0_n,\d S)\to\infty$ as $n\to\infty$, then $R\equiv 1$.

\end{enumerate}
\end{lemm}

\begin{proof}[Remark on the proof] \eqref{l0i}-\eqref{l0iii} follow from Theorems 1.1-1.3 in \cite{AKM}, if we just
observe that the normal families argument in \cite[Section 3]{AKM} works equally fine when we rescale
about an $n$-dependent point $p=0_n$, provided that the decisive condition \eqref{for} holds.
If further $\sqrt{n}\dist(0_n,\d S)\to\infty$, then $R\equiv 1$ by the a priori estimates in \cite[Section 5]{AKM}.
\end{proof}

A limit point $K$ in Theorem~\ref{l0} will be  called a \textit{limiting kernel}, and
$R$ is the corresponding \textit{limiting $1$-point function}.

\smallskip

There is nothing which prevents a limiting kernel $K$ from being trivial, i.e., we may well have $K=0$.
In this case the limit is the trivial point field, all of whose $k$-point
functions vanishes identically. On the other hand, the case when we rescale about a regular boundary point
was recently settled in a fairly general situation:

\begin{stat} (\cite{HW})
Let $p$ be a point on the outer boundary of $S$ and suppose that we rescale in the outwards normal direction.
Suppose also that all points on the outer boundary of $S$ be regular. Then $R(z)=F(z+\bar{z})$ where $F$ is the function in \eqref{freb}.
\end{stat}

By contrast, at an ordinary singular boundary point, we have the following "triviality theorem".

\begin{mth} \label{THI} Let $p\in \d S$ be an ordinary singular boundary point and rescale about $p$ by
\begin{equation}\label{thIS}
z_j=\sqrt{n\Lap Q(p)}(\zeta_j-p),\quad j=1,\ldots,n.
\end{equation}
Then any limiting $1$-point function $R$ vanishes identically.
\end{mth}

To obtain nontrivial two-dimensional scaling limits (using the scaling \eqref{thIS})
we shall rescale about a moving point located slightly inside the droplet, as follows.

\begin{defn}
Let $p\in \d S$ be an ordinary singular boundary point and fix a positive parameter $\parat$.

\begin{enumerate}[label=(\roman*)]
\item
If $\d S$ has a cusp at $p$, we consider the point $0_n\in S$ of distance
$\delta_n=\delta_n(T):=\parat/\sqrt{n\Lap Q(p)}$ from the boundary $\d S$, which is closest to the singular
point $p$. (See Fig. \ref{fig: moving}.)
\item
If $S$ has a double point, there are instead two distinct points $0_n'$, $0_n''$ in $S$ of distance
$\delta_n$
to $\d S$, of minimal distance to $p$. We let $0_n$ denote one of these two points.
\end{enumerate}
\end{defn}

Suppose first that $S$ has a $(\nu,2)$-cusp at $p$.
Rescale about $0=0_n$ according to
\begin{equation}\label{mop}z_j=\sqrt{n\Lap Q(0)}\,\zeta_j,\qquad j=1,\ldots,n,\end{equation}
where the axes of the $z$-plane are chosen so that the imaginary axis is tangent to the bisectrix, the positive imaginary direction pointing
"towards" the cusp, cf. Figure \ref{fig: moving}.

\smallskip

We emphasize that, as $n\to\infty$, the droplet looks (locally) more and more like the strip
\begin{equation}\label{Estrip}
\Sigma_\parat=\{z;\, -\parat\le \re z\le \parat\}.
\end{equation}

\begin{mth}\label{THII}
If $\parat$ is sufficiently large, then each limiting $1$-point function $R(z)=K(z,z)$ is everywhere
positive and gives rise to a solution to Ward's equation. Moreover,
$R$ satisfies the estimate
\begin{equation}\label{damp}
R(z)\le Ce^{-2(|x|-\parat)^2},\qquad (x=\re z).
\end{equation}
\end{mth}

\begin{mth}\label{THIII} If $S$ has a double point at $p$, we rescale as in \eqref{mop} with $0_n$
equal to either $0_n'$ or $0_n''$. The conclusions of Theorem \ref{THII} then hold also for the
limiting $1$-point function $R$ about $0_n$.
\end{mth}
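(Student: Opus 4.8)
The plan is to reduce to the proof of Theorem \ref{THII}; the only new ingredient is the local shape of $S$ at a double point, which turns out to produce the same rescaled model as a cusp. First the geometry. Since $p$ is a regular boundary point of each of the two Jordan components $\Omega_+,\Omega_-$ of $D\setminus S$, the two pieces of $\d S$ bounding them are simple real-analytic arcs through $p$; and since $\Omega_+\cap\Omega_-=\emptyset$ while $p$ lies on both boundaries, these arcs are tangent at $p$. Hence, in coordinates centred at $p$ with the common tangent horizontal, $S$ coincides near $p$ with $\{\,f_-(x)\le y\le f_+(x)\,\}$, where $f_\pm$ are real-analytic, $f_\pm(0)=f_\pm'(0)=0$, and $f_+-f_-$ vanishes to some even order $2m\ge2$ at $0$ (generically $2m=2$). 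Thus $S$ near $p$ consists of two thin ``blades'' joined at $p$, of width $\asymp|x|^{2m}$, and the moving points $p_n',p_n''$ are the centre points of the two blades at the parameter $x_0$ (of either sign) for which the distance to $\d S$ equals $T/\sqrt{n\Lap Q(p)}$. In particular $|x_0|\asymp n^{-1/(4m)}$, so that $\sqrt n\,|p-p_n'|\to\infty$.

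Rescale as in \eqref{mop} about $p_n'$, with $\theta_n$ chosen as prescribed. As $n\to\infty$ the image of the singular point escapes to $+i\infty$ (since $\sqrt n\,|p-p_n'|\to\infty$), carrying the pinch and the opposite blade with it, while the blade containing $p_n'$ converges locally uniformly to the straight strip $\mathcal{S}_T=\{\,z:|\re z|\le T\,\}$: its rescaled half-width at any fixed rescaled position tends to $T$, the widening of the blade occurring only at distances tending to infinity. This is exactly the rescaled picture used in the proof of Theorem \ref{THII} for a $(\nu,2)$-cusp with $\nu\ge5$ --- there a single blade of width $\asymp|x|^{\nu/2}$, here two blades of width $\asymp|x|^{2m}$, but neither the different exponent nor the presence of a second blade affects what follows, the far-away structure being exponentially negligible in all the estimates involved. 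Thus the rescaled droplets $S_n$ converge to $\mathcal{S}_T$.

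From here the argument follows Theorem \ref{THII}. By Lemma \ref{l0}, every subsequential limit of $c_nK_n$ equals $G\Psi$ with $\Psi$ Hermitian entire satisfying \eqref{moi}, and $R=\Psi(\cdot,\cdot)$ is either $\equiv0$ or everywhere positive; if $R\not\equiv0$ then Ward's equation \eqref{ward} holds by Theorem B in \cite{AKM}. So it remains to prove $R\not\equiv0$ and the bound \eqref{damp}. For \eqref{damp}: the standard exterior decay of the density reads, in rescaled variables, $R_n(z)\le C\,e^{-2\dist(z,S_n)^2}$, and also $R_n\le C$ uniformly; since $S_n\to\mathcal{S}_T$, letting $n\to\infty$ gives $R(z)\le C\,e^{-2\dist(z,\mathcal{S}_T)^2}$ along with $R\le C$, and (as $\dist(z,\mathcal{S}_T)=\max(|\re z|-T,0)$) these yield \eqref{damp} after enlarging $C$. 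For $R\not\equiv0$: since $p_n'$ lies at distance $r_n:=T/\sqrt{n\Lap Q(p)}$ from $\d S$, the open disk $D(p_n';r_n)$ is contained in $S$; inserting a peak polynomial concentrated at $p_n'$ on scale $1/\sqrt n$ into the extremal characterization of $\bfR_{n,1}(p_n')=\bfK_n(p_n',p_n')$, and using $D(p_n';r_n)\subset S$, gives $\bfR_{n,1}(p_n')\ge(1-Ce^{-cT^2})\,n\Lap Q(p_n')$, so that $R_n(0)\ge\tfrac12$ uniformly in $n$ once $T$ is large enough. Passing to the limit, $R(0)\ge\tfrac12>0$; hence $R$ is everywhere positive and satisfies Ward's equation. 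The same reasoning applies verbatim with $p_n''$.

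The only non-soft step is the last: the uniform lower bound $R_n(0)\ge\tfrac12$, which is exactly where the hypothesis that $T$ be large is used --- one needs a macroscopically deep disk to fit inside the thin droplet. Everything else is either the normal-families and Ward machinery of \cite{AKM} imported through Lemma \ref{l0}, or a transcription of the cusp computations of Theorem \ref{THII}; the double-point geometry, though different in detail, is qualitatively identical after rescaling, the remaining bookkeeping being to check that ``blade $\to$ strip'' holds in the locally uniform sense required, which is routine given $|x_0|\asymp n^{-1/(4m)}$.
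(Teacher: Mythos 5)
Your proposal follows the same overall strategy as the paper: reduce to the cusp case of Theorem~\ref{THII} by observing that the moving points $p_n',p_n''$ recede from $p$ at rate $\asymp n^{-1/(4m)}$ (the paper writes this as $\sim n^{-1/\nu}$ with $\nu\ge4$), so that the rescaled droplet near $p_n'$ looks like the strip $\Sigma_T$; invoke Lemma~\ref{l0} and Theorem~B for the limiting kernel, Ward's equation, and the positivity dichotomy; get \eqref{damp} from the exterior estimate; and get non-triviality for $T$ large from an interior estimate. Two remarks on where the routes diverge. First, for non-triviality you construct a peak polynomial to bound $\bfR_n(p_n')$ from below, while the paper simply cites Theorem 5.4 of \cite{AKM}, $|\bfR_n(\zeta)-n\Lap Q(\zeta)|\le C(1+ne^{-n\ell\Lap Q(\zeta)\delta(\zeta)^2})$, and plugs in $\zeta=p_n'$; the two are morally the same (that theorem is itself proved by H\"ormander-type estimates, which is the source of the large-$T$ requirement the paper flags), so this is a legitimate in-lining of the cited result rather than a genuinely new argument.

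Second, and more importantly, the decay bound $R_n(z)\le Ce^{-2\dist(z,S_n)^2}$ that you invoke as ``standard exterior decay in rescaled variables'' is precisely Theorem~\ref{Decay_Thm}, whose proof is the main technical content of Sections~\ref{Ext1}--\ref{Ext2}. What is standard is only $\bfR_n(\zeta)\le Cne^{-n(Q-\eqpot)(\zeta)}$; converting $n(Q-\eqpot)$ into $2\dist(z,S_n)^2$ with the sharp constant $2$, uniformly as the base point $p_n'$ moves toward the singularity, is exactly what Lemma~\ref{cusplem} and the Taylor/error analysis in Section~\ref{bpnc} supply. In particular the paper's Section~3.3 reduces the double-point case to the cusp computation and isolates the ``crucial observation'' that the error term $O(n|\tilde\lambda|^3)$ of \eqref{rak} tends to $0$, which needs $\nu>3$ and hence uses, in a quantitative way, that $p_n'$ is at distance $\gtrsim n^{-1/4}$ from the singular point. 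Your proposal gestures at this (``routine bookkeeping given $|x_0|\asymp n^{-1/(4m)}$'') but does not exhibit the estimate; this is the one step in your write-up that cannot simply be waved through and is where the tangency-order geometry is actually used, so it deserves to be spelled out, either by carrying out the analogue of \eqref{rak} or by explicitly citing Theorem~\ref{Decay_Thm} as a black box.
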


\begin{rems} (i) The assumption in Theorem \ref{THII} that the parameter $\parat>0$ be sufficiently large is made for technical reasons of the
proof. We do not think it should be necessary. This notwithstanding, we remark that the estimate \eqref{damp} is always true, for all $\parat>0$, as our proof
below shows.

(ii) The limiting point fields, whose existence is guaranteed by Theorems
\ref{THII} and \ref{THIII} are necessarily different from those
which can appear at a fixed regular boundary point. Indeed, as was observed in \cite{AKM},
a limiting $1$-point function rescaled in the outer normal direction about
a regular boundary point will satisfy the estimate
$
\babs{R(z)-\1_{(-\infty,0)}(x)}\le Ce^{-cx^2}
$
where $c$ is some positive constant. This estimate is clearly not consistent with \eqref{damp}.

(iii) It is interesting to compare with results  in the weakly Hermitian case, where the droplet is a narrow ellipse of height
proportional to $1/n$. (This is investigated in the papers \cite{FKS,ACV} and references.) In the "bulk", nontrivial scaling limits emerge at the $1/n$-scale, rather than at $r_n\propto 1/\sqrt{n}$. The relationship to our present setting will be clarified in a separate publication.
\end{rems}

We now discuss a family of natural candidates for limiting point fields in the above setting.

By definition, a point field with $1$-point function $R$ is called \textit{(vertical) translation invariant}
if $R(z)=R(x)$ where $x=\re z$.

Considering that the limiting droplet is the translation invariant strip \eqref{Estrip},
it seems highly plausible that each limiting 1-point function $R$ should be translation invariant as well.

In any case, we shall now use theory from \cite{AKM}
to narrow down the set of possible limiting kernels, under the extra hypothesis of translation invariance.

\begin{mth} \label{THIV}
The only non-trivial translation invariant solutions $R$ to Ward's equation~\eqref{ward} satisfying
the estimate~\eqref{damp} are given by
\begin{equation}\label{ER}
R(z) = F_I(2\re z),
\end{equation}
where $F_I$ is an entire function of the form
$$F_I(z):=\frac1{\sqrt{2\pi}} \int_I e^{-(z-t)^2/2}\,dt$$
for some interval $I$ contained in $[-2\parat,2\parat].$
\end{mth}

\begin{figure}[ht]\label{fig3}
\begin{center}
\includegraphics[width=.99\textwidth]{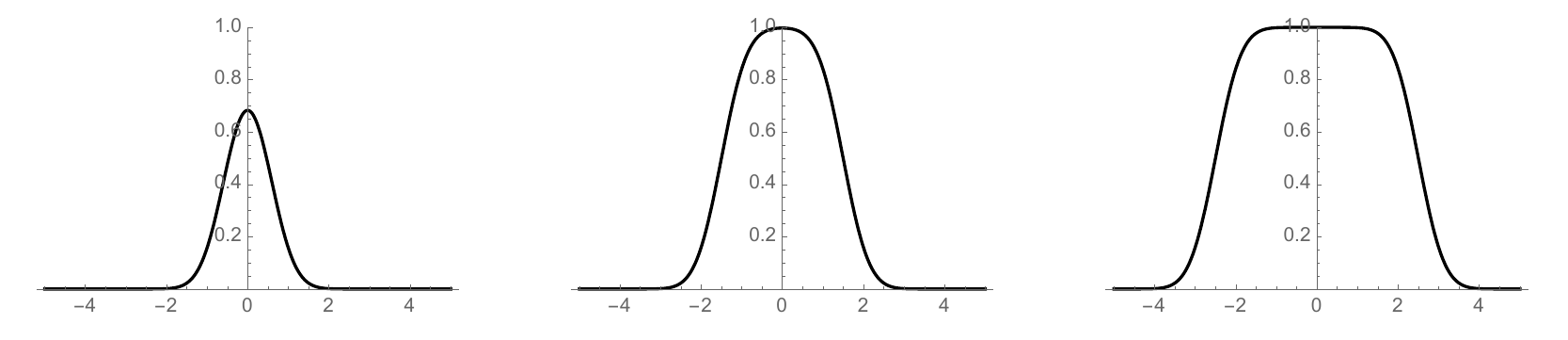}
\end{center}
\caption{The function
$x\mapsto F_{[-2\parat,2\parat]}(2x)$ for $\parat=1/2$, $\parat=3/2$, and $\parat=5/2$.}
\end{figure}

The limit $R$ in \eqref{ER} corresponds to the locally trace class projection kernel $K_I$ on $L^2(\C)$
given by $K_I(z,w)=e^{z\bar{w}-|z|^2/2-|w|^2/2}F_I(z+\bar{w})$, in the sense that $R(z)=K_I(z,z)$. (See \cite{AKM}, in particular Section 8.2, for more about this relationship.)
Hence each such $R$
determines a unique determinantal point field by the Macchi-Soshnikov theorem.

\smallskip

We {\em conjecture} that each limiting point field in Theorem \ref{THII} and Theorem \ref{THIII} is translation invariant, and thus that \eqref{ER} should give a complete list of scaling limits.
More precisely, we conjecture that the full interval $I=[-2\parat,2\parat]$ will appear in Theorem \ref{THIV}.

\smallskip

Observe that the 1-point functions $R_{[-2\parat,2\parat]}$ interpolates in a natural way between $R_\emptyset=0$ (at the singular point) and the
Ginibre kernel $R_\R=1$ (the bulk). See Figure 3.

\subsection{Lemniscate ensembles} To exemplify special boundary points,
we will now take a brief look at the potential
\begin{equation}\label{lemp}
Q=Q_k=|\zeta|^{2k}-2k^{-1/2}\re(\zeta^k).
\end{equation}
where $k\ge 2$ is an integer. Somewhat more generally, we will consider the $n$-dependent potential
$$
V_n(\zeta)=Q(\zeta)+\frac {2c} n\log\frac 1 {|\zeta|}
$$
where $c>-1$. This potential corresponds to insertion of a charge of strength $c$ at the origin relative to the external field $nQ$, see \cite{AKS}.

It is known (see \cite{BEG,GPSS} and references) that the droplet $S$ corresponding to $Q$ is the interior of the lemniscate
$|\zeta^k-1/\sqrt{k}|=1/\sqrt{k}$, while the equilibrium measure is
given by the density
${k^2}|\zeta|^{2k-2}\1_S(\zeta).$ In particular, $0\in\d S$ and $\Lap Q(0)=0$, so the origin is a special singular
boundary point.

A natural rescaling is
\begin{equation}\label{coarse}
z=r_n^{-1}\zeta,\qquad\qquad  r_n:=n^{-1/2k}.
\end{equation}
We write $K_n(z,w)=r_n^2\bfK_n(\zeta,\eta)$ for the rescaled kernel and put $V_0(z)=|z|^{2k}-2c\log|z|$.
Also write $d\mu_0(z)=e^{-V_0(z)}\, dA(z)$ and let $L^2_a(\mu_0)$ be the corresponding Bergman
space of entire functions. The Bergman kernel in this space is denoted $L_0(z,w)$.

The following compactness result is a special case of \cite[Theorem 1.1]{AKS}.

\begin{lemm} \label{th1}
There exists a sequence of cocycles $c_n$ such that
$$c_n(z,w)K_n(z,w)=L_n(z,w)e^{-V_0(z)/2-V_0(w)/2}(1+o(1))$$
where $L_n$ is Hermitian-entire and $o(1)\to 0$ locally uniformly on $\C^2$.
Moreover, each subsequence of the $L_n$'s has a further subsequence converging locally
uniformly on $\C^2$ to a Hermitian-entire limit $L$ which satisfies $L\le L_0$ in the sense of positive matrices.
\end{lemm}

Notice that, after the rescaling \eqref{coarse}, in the limit $n\to\infty$, the droplet takes the form of the "star"
$\Sigma=\{z;\, \re z^k\ge 0\}$ (see Fig. \ref{FIGURE4}).

\begin{figure}[ht]
\begin{center}
\includegraphics[width=.25\textwidth]{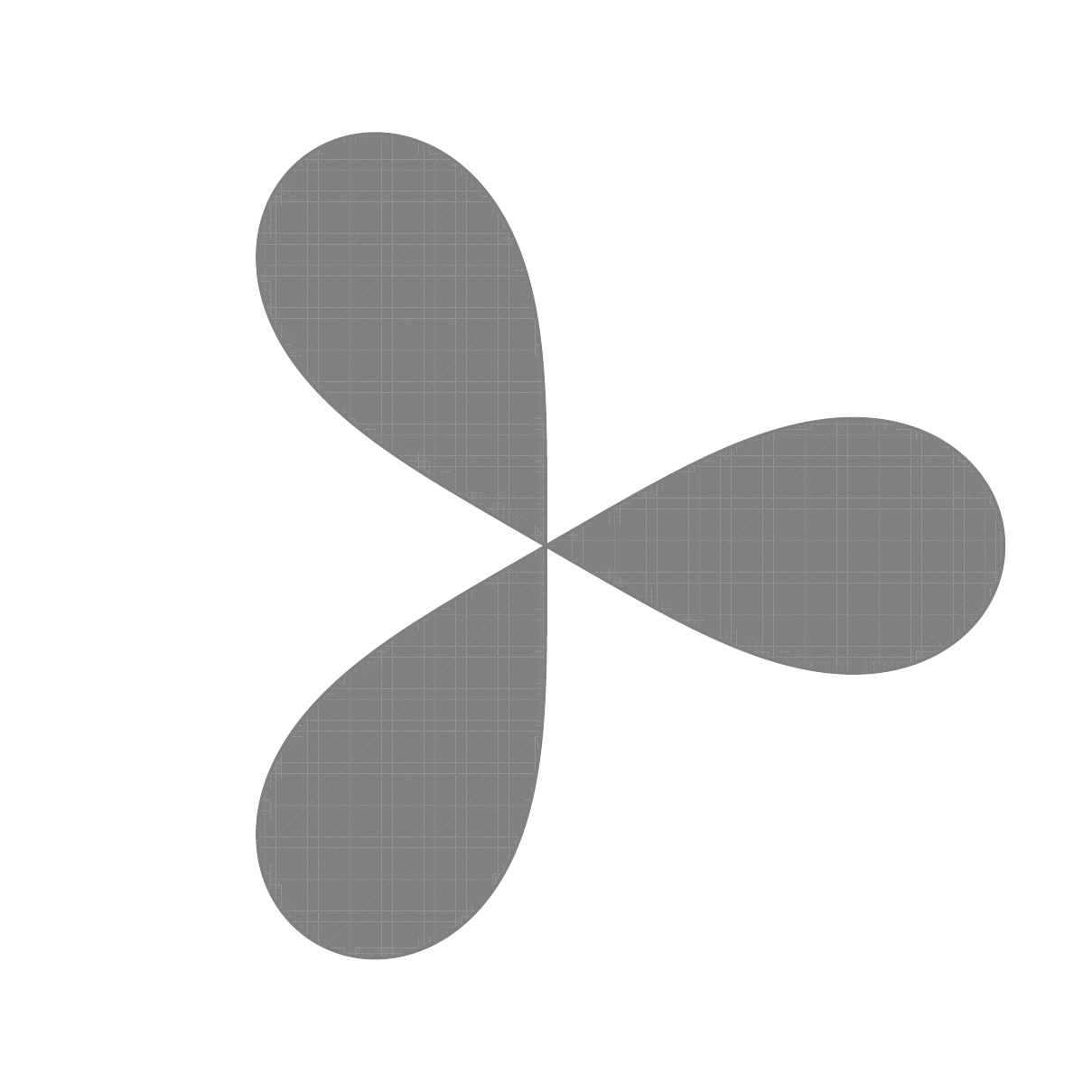}
\hspace{.05\textwidth}
\includegraphics[width=.25\textwidth]{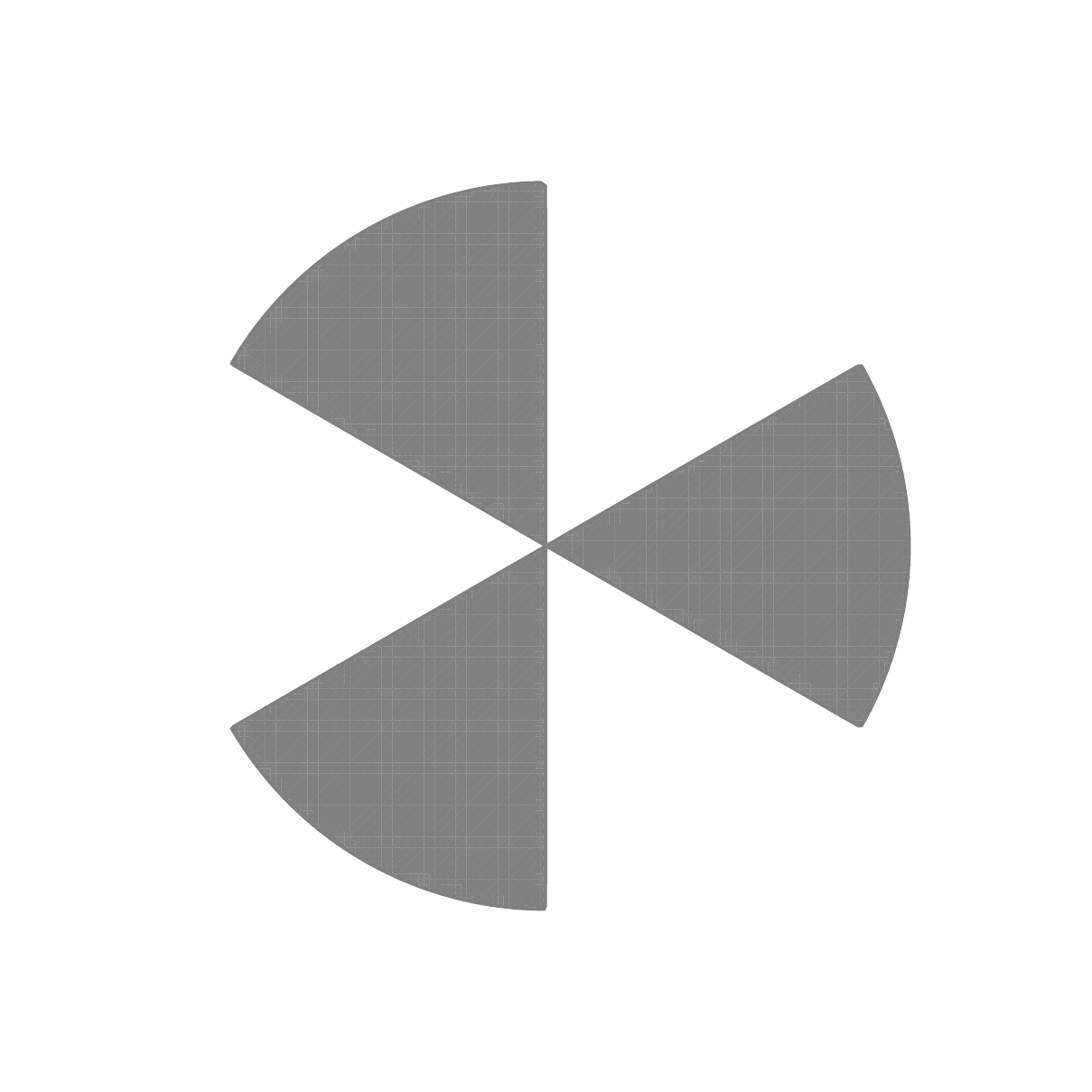}
\end{center}
\caption{The droplet $S$ with $k=3$. In the right hand picture we have zoomed at the origin.}
\label{FIGURE4}
\end{figure}

Let us now consider a limiting holomorphic kernel $L$ in Lemma \ref{th1}.
We write $K(z,w)=L(z,w)e^{-V_0(z)/2-V_0(w)/2}$ and $R(z)=K(z,z)$.

{ We call a subset $\Gamma$ of $\C$ "conical" if $z\in\Gamma$ and $t>0$ imply $tz\in\Gamma$.}

\begin{mth}
If $\Gamma\subset \C^*$ is a closed conical
set such that
$\Gamma\subset\Int \Sigma$, then there is a constant $\alpha=\alpha(\Gamma)>0$ such that
\begin{equation}\label{notr}
R(z)=\Lap Q(z)\cdot (1+O(e^{-\alpha|z|^{2k}})),
\qquad (z\in\Gamma,\quad z\to\infty).
\end{equation}
\end{mth}

\begin{proof}[Remark on the proof]
The proof from the bulk case in \cite{AS2} works also in the present situation. In particular, for
$z\in\Gamma$ with $|z|$ large, there is room inside $S$ to perform H{\"o}rmander estimates near the
corresponding $\zeta=n^{-1/2k}z$. (After all, we just need to be able to squeeze in an
$n^{-1/2}$-neighbourhood about $\zeta$ inside $S$, in order to apply \cite[Lemma 3.3]{AS2}.)
\end{proof}

The lemma shows that there is a unique nontrivial point field with $1$-point function $R$.
Indeed, via the theory in \cite{AKS} we have that $R>0$ on $\C^*$ (on $\C$ if $c\le 0$), and
a Ward equation of the form
$$
\dbar C(z)=R(z)-\Lap V_0(z)-\Lap \log R(z),
$$
holds pointwise on $\C^*$ and in the sense of distributions on $\C$.
It is also easy to see that $R$ enjoys the symmetry
$
R(ze^{2\pi i/k})=R(z).
$

By the general theory in \cite{AKS}, we know that a limiting holomorphic kernel $L$ is the Bergman kernel
of some contractively embedded subspace of the Bergman space $L^2_a(\mu_0)$, which has the reproducing kernel
$$
L_0(z,w)=kE_{\frac 1k,\frac {1+c}k}(z\bar{w}),\qquad E_{a,b}(z)=\sum_{j=0}^\infty \frac {z^j}{\Gamma(aj+b)}.
$$
In the case $k=1$, $c=0$ we know that $L$ has the structure $L(z,w)=F(z+\bar{w})L_0(z,w)$
where $F$ is the free boundary function \eqref{freb} and $L_0(z,w)=e^{z\bar{w}}$.
In other cases, the question of identifying the exact details of a limit
$R(z)=L(z,z)e^{-V_0(z)}$ seems
to be an open problem.

On a related note, the paper \cite{BEG} makes use of theory for
Riemann-Hilbert problems in order to study asymptotic properties of
orthogonal polynomials with respect to the lemniscate ensemble. A somewhat related situation in
a setting of complex geometry, is studied in the paper \cite{ZZ2}.

\subsection{Ordinary singular points on a hard edge}
We will now consider the hard edge case, where we confine the system $\{\zeta_j\}_1^n$ to the
droplet, by redefining $Q$ to be $+\infty$ in the complement $S^c=\C\setminus S$.

An analogue in the Hermitian setting is given by the soft/hard edge ensembles of Claeys and Kuijlaars  \cite{CK}.

We mention without proof the following field-theoretical motivation for studying hard edge ensembles.
In the paper \cite{AHM3} it was shown that, for a free boundary ensemble $\{\zeta_j\}_1^n$, the fluctuations of eigenvalues converge to
a Gaussian free field with {\em free boundary conditions} as $n\to\infty$. If one instead supplies the droplet with hard edge conditions, one obtains {\em Neumann boundary conditions}. Gaussian fields with Neumann boundary conditions have been studied in the recent papers \cite{KT,QW}.

\smallskip

We now describe the setting in detail. Assume that $Q$ is real-analytic in some open set $\Omega\subset\C$.
A compact subset $K$ of $\Omega$ is called a local droplet of $Q$ if
$\Delta Q \cdot \1_K$
is an equilibrium measure of the localized potential
$$Q_K:=Q\cdot \1_K + \infty\cdot \1_{\C\setminus K}.$$

If $S$ is the droplet in potential $Q$, then $S$ is also a local droplet, but we do not obtain all local droplets in this way.
For example, the deltoid in Figure \ref{fig: deltoid} is a local droplet of the
cubic potential
$Q(\zeta)= |\zeta|^2 + \re \zeta^3$, but is not a droplet since $Q$ does not have the required growth near $\infty$, or alternatively, since the deltoid has
$(3,2)$-cusps, see Fig. \ref{fig: deltoid}. This example shows, by the way, that $(3,2)$-cusps, which cannot appear on a free boundary, might well appear on a hard edge.

 \begin{figure}[ht]
 \begin{center}
 \includegraphics[height=.2\textwidth]{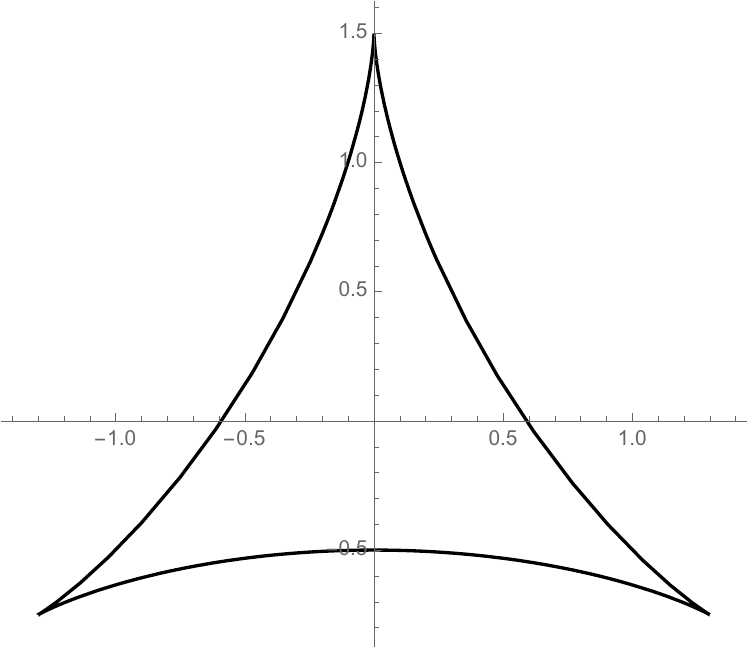}
 \end{center}
 \caption{The deltoid with three maximal $(3,2)$-cusps.}
 \label{fig: deltoid}
 \end{figure}

We now suppose that $Q$ is real-analytic and strictly subharmonic near a point $p\in\d S$ at which $\d S$ has a cusp.

As before, we fix a parameter $\parat>0$ and let $0_n\in S$ be a closest point to $p$ subject to
the condition $\dist(0_n,\d S)=\parat r_n$ where $r_n=1/\sqrt{n\Lap Q(p)}$, see Fig. \ref{fig: moving}.

Taking on the passive interpretation where $0_n=0$ and letting $p$ be on the positive imaginary axis, we rescale by
$$z_j=r_n^{-1}\zeta_j,\qquad j=1,\ldots,n.$$
As before, the limiting rescaled droplet is just the strip
$\Sigma_\parat=\{-\parat\le\re z\le \parat\}.$

The basic structure result in Lemma~\ref{l0} generalizes without difficulty, providing subsequential limiting kernels of the form
\begin{equation}\label{struct2}K(z,w)=G(z,w)\,\Psi(z,w)\, \1_{\Sigma_\parat}(z)\,\1_{\Sigma_\parat}(w).\end{equation}
Here $\Psi$ is an Hermitian-analytic function in the interior of $\Sigma_\parat\times \Sigma_\parat$, which we call the {\em reduced holomorphic kernel}
corresponding to $K$. (The corresponding {\em holomorphic} kernel is $L(z,w)=e^{z\bar{w}}\Psi(z,w)$.)

We remark that a detailed proof of the structure result \eqref{struct2} involves adapting the normal families argument from \cite{AKM} to the present case with a hard edge; details are straightforward, and are therefore skipped.

\smallskip

Applying Ward's identity with potential $Q_S$ and rescaling, precisely as in \cite[Section 4]{AKM}, we find that each limiting $1$-point function $R(z)=K(z,z)$, $R=\lim R_{n_k}$ satisfies
the {\em hard edge Ward's equation} (with parameter $\parat$)
\begin{equation}\label{allvar}\dbar C(z)=R(z)-1-\Lap\log R(z),\qquad z\in\Int\Sigma_\parat\end{equation}
where
\begin{equation}\label{all2}C(z)=\int_{\Sigma_\parat}\frac {B(z,w)}{z-w}\, dA(w),\qquad B(z,w)=\frac {|K(z,w)|^2}{R(z)},\qquad z\in \Int \Sigma_\parat.\end{equation}
Here of course the functions $K$, $B$, $C$
are uniquely determined by the diagonal values $R(z)=\Psi(z,z)=K(z,z)=B(z,z)$, so Ward's equation is a condition for the single function $R$.

 We remark, by contrast to Lemma \ref{l0}, that the inequality $R\le 1$ is false in the hard edge setting (compare Figure \ref{fig4}).

As before, it is natural to assume that the limit $R$ be translation invariant: $R(z)=R(\re z)$. By polarization this means that
$$\Psi(z,w)=\Phi(z+\bar{w})$$
where $\Phi$ is a holomorphic function in $\Sigma_\parat$. We shall assume that $\Phi$ takes the particular form
\begin{equation}\label{erty}\Phi(z)=\gamma*\fii(z)=\int_{-\infty}^{+\infty}\gamma(z-t)\, \fii(t)\, dt\end{equation}
where $\fii$ is a measurable window function of moderate increase (a tempered distribution) on $\R$. Functions of the type \eqref{erty} play an
important role in the sequel; it is convenient to designate them by a special name.

\begin{defn} A function $\Phi$ representable in the form
\eqref{erty} for some window function $\fii$ on $\R$ will be said to be of \textit{error function-type}.
\end{defn}

Note that the functions $F:=\gamma*\1_{(-\infty,0)}$ and $F_\parat:=\gamma*\1_{[-2\parat,2\parat]}$ (from \eqref{freb} and \eqref{ER}) corresponds to the windows $\fii=\1_{(-\infty,0)}$
and $\fii=\1_{[-2T,2T]}$, respectively. We will use both of these kinds of windows in order to construct classes of special functions which model the behaviour
of the particle system near a hard edge.

To this end, we first consider the Hermitian entire function $F_\parat$ defined by the window $\fii=\1_{[-2\parat,2\parat]}$, i.e., \begin{equation} \label{HI}
F_\parat(z)=\gamma*\1_{(-2\parat,2\parat)}(z)=F(z-2\parat)-F(z+2\parat).\end{equation}

Associated to a Borel measurable subset $E\subset\R$
we next define an entire function $H_{E,\parat}$ by
\begin{equation}\label{hedef}H_{E,\parat}(z)=\gamma*\left[\frac {\1_E}{F_\parat}\right](z)=\frac 1 {\sqrt{2\pi}}\int_E\frac {e^{-(z-t)^2/2}}{F_\parat(t)}\, dt,\qquad z\in\C.
\end{equation}

We have the following theorem.

\begin{mth}\label{THV} Suppose that $\Phi$ is of error function-type.
Then the function $R(z)=\Phi(z+\bar{z})\cdot\1_{\Sigma_\parat}(z)$ satisfies Ward's equation \eqref{allvar} in $\Int \Sigma_\parat$ if and only if there is an interval
$I\subset\R$ of positive measure such that $\Phi=H_{I,\parat}$.
\end{mth}

We will also prove a result on limiting reduced kernels $\Psi$ giving rise to solutions to the \textit{mass-one equation} in $\Sigma_\parat$, i.e., to the equation
\begin{equation}\label{moe}\int_{\Sigma_\parat} e^{-|z-w|^2}|\Psi(z,w)|^2\, dA(w)=\Psi(z,z),\qquad (z\in \Sigma_\parat).\end{equation}

\begin{mth}\label{THVI} Suppose that $\Psi$ is translation invariant, $\Psi(z,w)=\Phi(z+\bar{w})$, where $\Phi$ is of error function-type.
Then the mass-one equation \eqref{moe} holds if and only if we have $\Phi=H_{E,\parat}$ where $E$ is some Borel subset of $\R$ of positive measure.
\end{mth}

As a consequence of Theorem \ref{THVI}, we note that the designation
$$
R_{E,T}(z)=H_{E,\parat}(z+\bar{z})\1_{\Sigma_\parat}(z)
$$
associates to each Borel set $E\subset\R$ of positive measure a unique determinantal point field in $\Sigma_\parat$.
Moreover, by Theorem \ref{THV}, the functions $R_{I,T}$ with $I$ an interval
also
give rise to solutions to Ward's equation.

Similar to the free boundary case, we conjecture that each limiting point field is translation invariant and
is determined by the limiting $1$-point function $R=R_{[-2T,2T],T}$.

\begin{figure}[ht]
\begin{center}
\includegraphics[width=.99\textwidth]{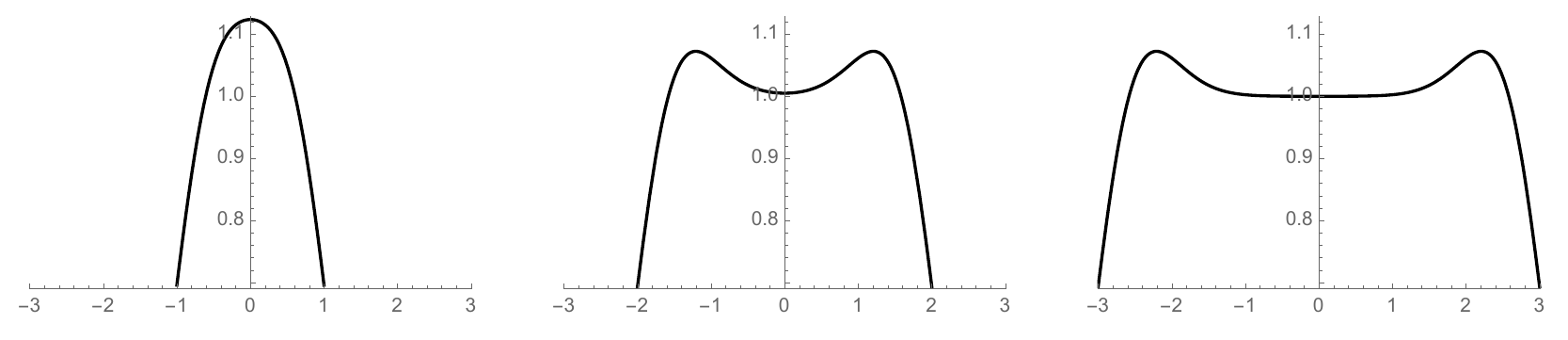}
\end{center}
\caption{The graph of $R_{[-2\parat,2\parat],T}$ restricted to the reals, for $\parat=1$, $\parat=2$, and $\parat=3$.}
\label{fig4}
\end{figure}

One can also equip lemniscate ensembles with a hard edge and prove existence of yet other point fields; details are left to a future investigation.

\subsection{Regular points at a hard edge}

Suppose we rescale about a regular boundary point so that the limiting rescaled droplet is the left half-plane
$\L=\{z;\, \re z\le 0\}$. Recall from \eqref{erty} that a function $\Phi$ is of error function-type if it is
given as a convolution $\gamma*\fii$ of the Gaussian $\gamma$ with some window function $\fii$.

We have the following result.

\begin{mth}\label{TH8}
Suppose that $R(z)=\Phi(z+\bar{z})\cdot \1_\L(z)$ with $\Phi=\gamma*\fii$ of error function-type.
Then $R$ solves Ward's equation
$$\dbar C=R-1-\Lap \log R\quad \text{in}\quad \L,\quad \text{where}\quad C(z):=\int_\L\frac {B(z,w)}{z-w}\, dA(w)$$
if and only if there is an interval $I$
of positive measure such that $$\Phi=H_I:=\gamma*\left[\frac {\1_I} F\right],$$ where as always
$F=\gamma*\1_{(-\infty,0)}$.
Also, $\Phi=\gamma*\fii$ gives rise to a solution to the mass-one equation in $\L$, i.e., the equation
$$\int_{\L} e^{-|z-w|^2}|\Phi(z+\bar{w})|^2\, dA(w)=\Phi(z+\bar{z}),\qquad (z\in \L)$$
if and only if
$\Phi=H_E:=\gamma*\left[\frac {\1_E} F\right]$ for some Borel set $E$ of positive measure.
\end{mth}

\begin{rem}
When we choose $I=(-\infty,0)$ we recover the
hard edge plasma function $H$, which appears in the scaling limit at a regular point on the hard edge corresponding to a radially symmetric potential (cf. \cite{A,AKM}),
$$H(z)=H_I(z)=\int_{-\infty}^0\frac {\gamma(z-t)}{F(t)}\, dt.$$
To our knowledge, it is not known whether this limit holds for non-symmetric potentials, but we believe that this should be the case.

It is interesting to note that the function $H$ appeared already in 1982, in the paper \cite{Sm} due to E.R. Smith, cf. also \cite[Section 15.3.1]{Fo}.
\end{rem}

\subsection{Plan of the paper}
In Section \ref{Ext1} we study the "effective potential" $Q-\eqpot$ locally near a singular boundary point, where $\eqpot$ is the so-called obstacle function.
More precisely, we derive an asymptotic formula for $Q-\eqpot$, which is used in Section \ref{Ext2} to deduce the exterior estimate \eqref{damp} of the 1-point function.

In Section \ref{PMR} we combine our apriori estimates from Section \ref{Ext2}
with the compactness argument in Lemma \ref{TH0}, in order to prove Theorems \ref{THI}-\ref{THIII}.
We also prove Theorem \ref{THIV} on possible translation invariant limits, by using theory for Ward's equation from the paper \cite{AKM}.

The paper is concluded by a complete analysis of translation invariant solutions to Ward's equation with hard edge
confinement in Section \ref{hard_edge}.

\section{Asymptotic expansion of $Q-\eqpot$} \label{Ext1}

\subsection{Plan of this section}
In order to estimate the density $\bfR_n(\zeta)$ near a conformal cusp, we shall first establish an asymptotic expansion of $Q-\eqpot$, where
$\eqpot$ is an auxiliary subharmonic function known as the {\em obstacle function} pertaining to the potential (or "obstacle") $Q$. Using this,
 we shall also prove the important fact that $(3,2)$-cusps do not appear on a free boundary.

\subsection{The obstacle problem near a cusp} \label{tobs}

By definition, $\eqpot$ is the
largest subharmonic function which is bounded above by $Q$ and grows at most as
$\log|\zeta|^2+O(1)$ as $\zeta\to\infty$.
It is well known that this $\eqpot$ is a $C^{1,1}$-smooth function on $\C$ which coincides with $Q$ on $S$
and is harmonic in $\C\setminus S$, with $\eqpot(\zeta)=\log\babs{\zeta}^2+O(1)$ as
$\zeta\to\infty$. (See \cite{HM,ST} for details.)

The reader should note that $Q-\eqpot\ge 0$ everywhere with equality
on $S$ and with strict inequality in $S^c$, except possibly for
some "shallow points"
outside $S$ at which $Q=\check{Q}$, see \cite{HM}.

Now suppose that the droplet has a conformal cusp at the point $0\in\d S$. We assume without loss of generality that $0$ is on the outer boundary, i.e.,
that $0\in \d U$ where $U$ is the unbounded component of $S^c$. We will also assume that the cusp at $0$ points in the negative real direction (as in Fig. \ref{fig: moving}).

Given these proviso, we fix a (surjective) conformal map $\Phi:\HP\to U$ such that $\Phi(0)=0$ and $\Phi(i)=\infty$,
where $\C_+=\{\lambda\in\C;\,\im\lambda>0\}$ is the upper half plane. Note that the outer boundary of $S$ coincides with $\Phi(\R)$ and (since
the cusp is conformal) that $\Phi$ extends analytically to some neighbourhood of the origin.

We can assume that $\Phi'$ has
the Taylor expansion near $\lambda=0$
$$\Phi'(\lambda)=\lambda+a_2\lambda^2+a_3\lambda^3+\cdots.$$
Now form the functions
$$Q_\Phi:=Q\circ\Phi,\qquad\qquad \eqpot_\Phi:=\eqpot\circ\Phi.$$
The function $\eqpot_\Phi$ is harmonic in $\HP$ and extends across $\R$ to a harmonic function $V$.
Write
\begin{equation*}\label{phy}M(\lambda):=\left(Q_\Phi-V\right)(\lambda),\qquad \lambda=\sigma+i\tau.\end{equation*}
Thus $M=(Q-\eqpot)\circ\Phi$ in $\HP$.

Note that $M\ge 0$ and that $M=\d M=0$ on $\R$.

\begin{lem} \label{cusplem} For $\lambda=\sigma+i\tau$, we have
\begin{equation}\label{essex}M(\lambda)= 2\Lap Q(0)\tau^{\,2}\sigma^{\,2}+O(\lambda^{\,5}),\qquad (\lambda\to 0).\end{equation}
\end{lem}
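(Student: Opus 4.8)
The plan is to compute the Taylor expansions of $Q_\Phi = Q \circ \Phi$ and of $V = \eqpot_\Phi$ separately near $\lambda = 0$ and subtract. Recall that $Q$ and $\eqpot$ agree to infinite order along $\partial S$ near $p$ (they are both $C^{1,1}$, coincide on $S$, and $\eqpot$ is harmonic outside while $Q$ is real-analytic; the difference $Q - \eqpot$ vanishes together with its gradient on $\partial S$). Under the conformal map $\Phi:\HP \to U$, the boundary $\partial S$ near $p$ corresponds to the real axis $\R$, so $M(\lambda) = (Q-\eqpot)\circ\Phi$ vanishes to second order as $\tau = \im \lambda \to 0$. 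Thus $M(\sigma + i\tau) = h(\sigma)\,\tau^2 + O(\tau^3)$ for some real-analytic function $h$, and the whole point is to identify $h(\sigma) = 2\,\Lap Q(p)\,\sigma^2 + O(\sigma^3)$.

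The key computational step: differentiate $M$ twice in $\tau$ and evaluate on $\R$. Since $V$ is \emph{harmonic} in a neighbourhood of $0$, $\Lap V = 0$ there, so $\partial_\tau^2 V = -\partial_\sigma^2 V$. Meanwhile $Q_\Phi$ satisfies $\Lap(Q\circ\Phi) = |\Phi'|^2 \cdot (\Lap Q)\circ\Phi$ by the conformal transformation rule for the Laplacian. Writing out $\partial_\tau^2 M = \partial_\tau^2 Q_\Phi - \partial_\tau^2 V$ and using $\partial_\tau^2 Q_\Phi = 4\Lap Q_\Phi - \partial_\sigma^2 Q_\Phi$, one gets on $\R$:
\[
\tfrac12\,\partial_\tau^2 M\big|_{\tau=0}(\sigma) = 2\,|\Phi'(\sigma)|^2\,(\Lap Q)(\Phi(\sigma)) - \tfrac12\,\partial_\sigma^2\big[(Q_\Phi - V)|_{\tau=0}\big](\sigma).
\]
But $(Q_\Phi - V)|_{\tau = 0} \equiv 0$ on $\R$ near $0$, because $\Phi$ maps $\R$ to $\partial S$ where $Q = \eqpot$; hence the second term drops out. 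So $h(\sigma) = 2\,|\Phi'(\sigma)|^2\,(\Lap Q)(\Phi(\sigma))$. Now plug in $\Phi'(\sigma) = \sigma + a_2\sigma^2 + \cdots$, so $|\Phi'(\sigma)|^2 = \sigma^2 + O(\sigma^3)$, and $(\Lap Q)(\Phi(\sigma)) = \Lap Q(p) + O(\sigma)$ by continuity (here $\Phi(0) = p = 0$). This gives $h(\sigma) = 2\,\Lap Q(p)\,\sigma^2 + O(\sigma^3)$, hence
\[
M(\sigma + i\tau) = 2\,\Lap Q(p)\,\sigma^2\tau^2 + O(|\lambda|^5),
\]
since the $O(\sigma^3)\tau^2$ and $O(\tau^3)$ contributions (the latter multiplied by functions vanishing to order $2$ in $\sigma$, since $\partial_\tau^k M|_{\tau=0}$ for $k \geq 1$ all carry the factor that forces vanishing along the boundary) are all $O(|\lambda|^5)$.

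The main obstacle I anticipate is bookkeeping the error terms to confirm that everything not equal to the stated leading term is genuinely $O(|\lambda|^5)$ and not $O(|\lambda|^4)$ or $O(|\lambda|^3 \tau^2)$. One must argue carefully that \emph{all} Taylor coefficients $\partial_\sigma^j \partial_\tau^k M|_0$ with $k \in \{0,1\}$ vanish (from $Q = \eqpot$ and $\nabla Q = \nabla \eqpot$ on $\partial S$, transported through $\Phi$), and that the remaining low-order coefficients with $k = 2$ are exactly $\partial_\sigma^j h(0)$ for $j = 0, 1$, which vanish because $h(\sigma) = O(\sigma^2)$. A clean way to organise this is: write $M(\lambda) = \tau^2 g(\sigma, \tau)$ with $g$ real-analytic (possible since $M$ and $\partial_\tau M$ vanish on $\R$), note $g(\sigma, 0) = \tfrac12 \partial_\tau^2 M|_{\tau = 0}(\sigma) = h(\sigma) = O(\sigma^2)$, and then $M(\lambda) = \tau^2(h(\sigma) + O(\tau)) = \tau^2 \sigma^2 (2\Lap Q(p) + O(\sigma)) + O(\tau^3 \sigma^2)$, and since $\tau^3\sigma^2 = O(|\lambda|^5)$ and $\tau^2\sigma^3 = O(|\lambda|^5)$, the claim \eqref{essex} follows. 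One should double-check the constant $2$ against the normalization $\Lap = \partial\dbar$ (a factor of $4$ enters from $\partial_\tau^2 + \partial_\sigma^2 = 4\Lap$) — this is exactly where the stated coefficient $2\Lap Q(p)$ rather than $\tfrac12$ or $8$ comes from.
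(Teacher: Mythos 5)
Your approach to identifying the leading coefficient is clean and genuinely different from the paper's: you compute $\partial_\tau^2 M\big|_{\tau=0}=4\Lap Q_\Phi$ directly from the harmonicity of $V$ and the identity $\partial_\tau^2=4\Lap-\partial_\sigma^2$, whereas the paper derives a full asymptotic expansion of $M$ via the Poisson integral representation of $\eqpot_\Phi$ (expansion \eqref{pfe}, identity \eqref{isis}, leading to \eqref{lal}). Your shortcut is the more elementary way to get $h(\sigma)=2\Lap Q_\Phi(\sigma)=2\Lap Q(p)\sigma^2+O(\sigma^3)$; the paper's heavier machinery buys an explicit formula for \emph{every} $\tau$-coefficient of $M$ in terms of $\Lap Q_\Phi$ and its derivatives, which makes the higher-order error terms transparent.

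It is precisely there that your writeup has a genuine gap. You assert that $\partial_\tau^k M\big|_{\tau=0}$ ``all carry the factor that forces vanishing along the boundary,'' i.e.\ are $O(\sigma^2)$, and then promote $\tau^2\cdot O(\tau)$ to $O(\tau^3\sigma^2)$. That claim is not verified, and in fact it fails at $k=4$: from the paper's \eqref{lal}, $\partial_\tau^4 M\big|_{\tau=0}=4(\partial_\tau^2-\partial_\sigma^2)\Lap Q_\Phi$, and since $\Lap Q_\Phi(\lambda)=\Lap Q(p)(\sigma^2+\tau^2)+O(|\lambda|^3)$, the degree-two part is annihilated by $\partial_\tau^2-\partial_\sigma^2$ but the $O(|\lambda|^3)$ remainder contributes a generically nonzero $O(\sigma)$ term (it involves $\re a_2$), not $O(\sigma^2)$. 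The conclusion still holds because $\tau^4\cdot O(\sigma)=O(|\lambda|^5)$ and $\tau^5=O(|\lambda|^5)$, but the way you packaged the estimate, ``$M=\tau^2(h(\sigma)+O(\tau))=\cdots+O(\tau^3\sigma^2)$,'' is not literally correct. A clean fix is to separate the $\tau^3$, $\tau^4$, and $\tau^{\ge 5}$ terms: verify that the coefficient of $\tau^3$ is $O(\sigma^2)$ (it is, since $\partial_\tau\Lap Q_\Phi\big|_{\tau=0}=O(\sigma^2)$), that of $\tau^4$ is $O(\sigma)$, and note that $\tau^5\cdot O(1)$ is already $O(|\lambda|^5)$; equivalently, observe that $M$ vanishes to fourth order at $0$ and compute that the degree-four homogeneous part is exactly $2\Lap Q(p)\sigma^2\tau^2$ using the coefficients $c_{22}=\tfrac14\partial_\sigma^2\partial_\tau^2M|_0$, $c_{13}=\tfrac16\partial_\sigma\partial_\tau^3M|_0$, $c_{04}=\tfrac1{24}\partial_\tau^4M|_0$, all of which follow from your formula $\partial_\tau^2M|_{\tau=0}=4\Lap Q_\Phi$ and its $\tau$-derivatives.
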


\begin{proof}
We will deduce a full Taylor expansion of $M(\sigma+i \tau)$, which will be useful at later stages.
Notice that
\begin{align}\label{eq:id-deriv-op}
(\partial_\sigma^2+\partial_\tau^2)M(\sigma+i\tau)&=4\Delta M(\sigma+i\tau)\\
\label{latter} \partial_\sigma^jM=\partial_\sigma^j\partial_\tau M&=0\qquad (\text{on}\quad \R),
\end{align}
for $j\ge 1$. To see why \eqref{latter} holds true, just notice that the quadratic vanishing along $\R$ implies that
$M$ may be written as $M(\sigma+i\tau)=\tau^2f(\sigma+i\tau)$ where $f$ is smooth up to $\R$. Hence
$$
\partial_\sigma^j\partial_\tau M(\sigma+i\tau)\big\vert_{\tau=0}=\tau^2\partial_\sigma^j\partial_\tau f(\sigma+i\tau)
+2\tau\partial_\sigma^jf(\sigma+i\tau)\big\vert_{\tau=0}=0.
$$

We proceed to use the identities \eqref{eq:id-deriv-op}-\eqref{latter} to simplify the Taylor expansion of $M$ at the origin.
For any indices $j\ge 0$ and $k\ge 2$ we have
$$
\partial_\sigma^j\partial_\tau^k M(\sigma+i\tau)=\partial_\sigma^j\partial_\tau^{k-2}
(4\Delta Q_\Phi-\partial_\sigma^2M)
=4\partial_\sigma^j\partial_\tau^{k-2}{\Delta} Q_\Phi - \partial_\sigma^{j+2}\partial_{\tau}^{k-2}M.
$$
If we iterate this $l$ times, as long as $k-2l\ge 2$, the right hand side takes the form
$$4\sum_{1\le l\le \lfloor k/2\rfloor}(-1)^{l-1}
\partial_\sigma^{j+2l-2}\partial_\tau^{k-2l}{\Delta}Q_\Phi(\sigma+i\tau)
+{(-1)^{\lfloor k/2\rfloor}}\partial_\sigma^{j+2\lfloor k/2\rfloor}\partial_\tau^{k-2\lfloor k/2\rfloor}M(\sigma+i\tau).
$$
Here, we note that the number $k-2\lfloor k/2\rfloor$ is either $0$ or $1$. Hence, when evaluating
at $\tau=0$ we obtain
$$
\partial_\sigma^j\partial_\tau^k M(\sigma+i\tau)\big\vert_{\tau=0}=
4\sum_{1\le l\le \lfloor k/2\rfloor}(-1)^{l-1}
\partial_\sigma^{j+2l-2}\partial_\tau^{k-2l}{\Delta}Q_\Phi(\sigma+i\tau)\big\vert_{\tau=0}.
$$
For the Taylor expansion of $M$ in $\tau$ this means
\begin{equation}
M(\sigma+i\tau)=\sum_{k\ge 2}\frac{c_k(\sigma)}{k!}\tau^k,
\end{equation}
where
$$
c_k(\sigma)=4\sum_{1\le l\le \lfloor k/2\rfloor}(-1)^{l-1}
\partial_\sigma^{2l-2}\partial_\tau^{k-2l}{\Delta}Q_\Phi(\sigma+i\tau)\big\vert_{\tau=0}.
$$
The first few terms of this expansion read
\begin{multline}\label{lal}M(\sigma+i\tau)=2\Lap Q_\Phi(\sigma)\cdot\tau^{\,2}+\frac 4 {3!}\d_\tau \Lap Q_\Phi(\sigma)\cdot\tau^{\,3}
+\frac 4 {4!}(\d_\tau^{\,2}-\d_\sigma^{\,2})\Lap Q_\Phi(\sigma)\cdot\tau^{\,4}\\
+\frac{4}{5!}(\d_\tau^3-\d_\tau\d_\sigma^2)\Lap Q_\Phi(\sigma)\tau^5+\cdots.\end{multline}
But the Laplacian of $Q_\Phi$ may be computed as follows
$$\Lap Q_\Phi(\sigma+i\tau)=\Lap Q(\Phi(\sigma+i\tau))\babs{\,\Phi'(\sigma+i\tau)\,}^{\,2}=\Lap Q(\Phi(\sigma+i\tau))\cdot(\sigma^{\,2}+\tau^{\,2}+\ldots),$$
so in particular $\Lap Q_\Phi(\sigma)=\Lap Q(\sigma)\cdot (\sigma^2+O(\sigma^3))$.
We have shown that
$$M(\sigma+i\tau)= 2\Lap Q(0)\cdot \sigma^{\,2}\tau^{\,2}+\Ordo(\lambda^{\,5}),\qquad \lambda=\sigma+i\tau\to 0.$$
The proof of Lemma \ref{cusplem} is complete.
\end{proof}

\begin{rem}
We want to thank one of the anonymous referees for suggesting the above short proof.
\end{rem}

\subsection{First application: a preliminary estimate for the 1-point function} Keeping the assumptions in the preceding subsection,
we now rescale about the cusp-point $\zeta=0$ by
\begin{equation}\label{reu}z=i\sqrt{n\Lap Q(0)}\,\zeta.\end{equation}

We shall estimate the rescaled 1-point function $R_n(z)=K_n(z,z)$ and a subsequential limit $R(z)=K(z,z)$.

\begin{lem}\label{2est} For each subsequential limit $R=\lim R_{n_k}$ we have
\begin{equation}\label{aest}R(z)\le Ce^{\,-\,2\, x^{\,2}},\qquad (x=\re z).\end{equation}
\end{lem}

\begin{proof}

Our proof depends on the basic estimate
\begin{equation}\label{dda}\bfR_n(\zeta)\le Cne^{\,-\,n(Q-\eqpot)(\zeta)},\end{equation}
which in fact holds at each
point $p=p_n\in\C$ at which $Q$ is smooth and satisfies a bound $\Lap Q\le C_1$ in some
disc $D(p;c/\sqrt{n})$ with fixed $c>0$.

For completeness, we first outline a proof of the well known estimate \eqref{dda}.

If $f=q\cdot e^{-nQ/2}$ is a weighted polynomial, then the function $F(\zeta)=|f(\zeta)|^2 e^{an|\zeta|^2}$ is logarithmically subharmonic in $D(p;c/\sqrt{n})$, provided that $a>C_1$. It now suffices to apply the sub mean value property of $F$ in that disc, followed by the argument in \cite[Section 3.4]{AKM}. This shows \eqref{dda}.

The estimate \eqref{aest} now follows from the estimate \eqref{dda} and
Lemma~\ref{cusplem}.
Indeed, the estimate \eqref{dda} gives (with a new $C$ depending on $\Lap Q(p)$)
\begin{equation}\label{aestet}R_n(z)\le Ce^{\,-\,nM\left(\lambda_n(z)\right)},\qquad
\text{where}\quad  \lambda_n(z):=\Phi^{\,-1}(-iz/\sqrt{n\Lap Q(p)}).\end{equation}
If $z=x+iy$, then, since $\Phi(\lambda)=\lambda^2/2+O(\lambda^3)$ as $\lambda\to 0$,
\begin{equation}\label{yul}-x=\sqrt{n\Lap Q(p)}\im \left(\lambda^{\,2}/2+\Ordo(\lambda^{\,3})\right)=\sqrt{n\Lap Q(p)}\left(\sigma\tau+\Ordo(\lambda^{\,3})\right),
\quad (\lambda=\sigma+i\tau\to 0).\end{equation}
The estimates \eqref{essex} and \eqref{yul} now give that
$$nM\left(\lambda_n(z)\right)=  2x^{\,2}+\Ordo(n\,\lambda_n(z)^{\,5}),\quad (n\to\infty).$$
Choosing, for example, $|\,z\,|\le \log n$, we see via
\eqref{aestet} that the estimate \eqref{aest} holds.
\end{proof}

\begin{rem} We will improve Lemma \ref{2est} in Section \ref{PMR} by proving that in fact $R\equiv 0$ ("triviality theorem").
Note that formally, Lemma \ref{2est} is just the
special case $T=0$ in Theorem \ref{THII}.
\end{rem}

\subsection{Second application: impossibility of $(3,2)$-cusps}\label{ss:classification}
We shall now show that Lemma \ref{cusplem} excludes the possibility of a $(3,2)$-cusp at the origin (keeping our setting from Subsection \ref{tobs}). As stated earlier,
this generalizes a result due to Sakai \cite{Sa2} concerning the Hele-Shaw case where $\Lap Q=1$ in a neighbourhood of the droplet.

To this end, we first observe (in view of \eqref{wefind}) that the outer boundary admits
a local
parameterization
$$
(\partial S)\cap D(0;\delta)=\{x+i y\in D(0;\delta)\,:\,x=t^2/2,\;\; y=f(t),\;\; t\in [-\epsilon,\epsilon]\},
$$
where
$$
f(t)=c_\nu t^{\nu}+O(t^{\nu+1})
$$
and where $\nu\ge 3$ is the smallest integer such that $c_\nu\ne 0$.

We now obtain two different cases. If
$\nu$ is odd, then the cusp is {\em symmetric} in the sense that the boundary $\d S$ near $0$ is approximated by the union of two symmetric curves $y=\pm c x^{\nu/2}$, $x\ge 0$, where $c\ne 0$ is a constant depending on $c_\nu$.
On the other hand, if $\nu$ is even, then the cusp is {\em bent}, i.e., the droplet is locally given as the region between two graphs of the form $y=c x^{\nu/2}+\cdots$, which have a tangency of the order $\nu/2$ at the cusp.
These two situations are depicted in Fig. \ref{fig: moving}.

In fact, more is true: the only cusps that can appear on the boundary of $S$ satisfy $\nu\not\equiv 3 \mod 4$. We shall here settle
by showing that $(3,2)$-cusps can not appear. (This will be of importance later on.)
A general proof that $(3+4n,2)$-cusps can not appear can be based on \cite[Proposition 4.1]{Sa2}.

\begin{prop} \label{32prop}
A cusp of type $(3,2)$ cannot occur on the boundary $\d S$.
\end{prop}
\begin{proof}
Assume without loss of generality that a cusp of type $(3,2)$ occurs at the origin,
and moreover that it points in the negative real direction. In order to reach a contradiction,
we intend to compute $M(i\tau)$ using \eqref{lal}, and show that
$M(i\tau)$ must take on negative values arbitrarily close to $0$.
Since the cusp is assumed to be of type $(3,2)$ the conformal mapping $\Phi$
takes the form
$$
\Phi(\lambda)=\frac12 \lambda^2+\frac{a+i b}{3}\lambda^3+O(\lvert \lambda\rvert^4)
$$
where $b\ne 0$, from which it follows that
\begin{align*}
\Lap Q_\Phi(\lambda)&=\Lap Q(\Phi(\lambda))\babs{\Phi'(\lambda)}^2\\
&=\Lap Q(\Phi(\lambda))\left[\sigma^2+\tau^2+2a(\sigma^3+\sigma \tau^2)-2b(\tau^3+\sigma^2\tau)+
O(\lvert \lambda\rvert^4)\right].
\end{align*}
when $\lambda=\sigma+i\tau\to0$.
A computation of the first five coefficients in the expansion \eqref{lal} now shows that, as $\tau\to 0$,
\begin{align*}
M(i \tau)&=\frac 4 {5!}(\d_\tau^3-\d_\tau\d_\sigma^2)\Lap Q_\Phi(\sigma)\tau^5+O(\tau^6)\\
&=\frac 4 {5!}(-2b)(3!-2!)\Lap Q(0)\tau^5+O(\tau^6)=-\frac 4 {15}b\Lap Q(0)\tau^5+\cdots
\end{align*}
from which the assertion follows.
\end{proof}

 \begin{ex} $(5,2)$-cusps actually do appear on the free boundary of some droplets. To see this, one can consider potentials of the form
 $Q_t(\zeta)=(1/t)|\zeta|^2-2c\log|\zeta-a|-2c\log|\zeta-\bar{a}|$ where $c>0$ and $a$ is a non-real complex number. Here the parameter $t$ equals to the area
 of the droplet (divided by $\pi$). Fixing $a$ and $c$ suitably, the droplet develops a $(5,2)$-cusp for a certain critical value $t=t_0$, see
 Figure~\ref{fig7}. We are grateful to S.-Y. Lee and M. Yang for communicating this example.
\begin{figure}[ht]
\begin{center}
\includegraphics[width=.25\textwidth]{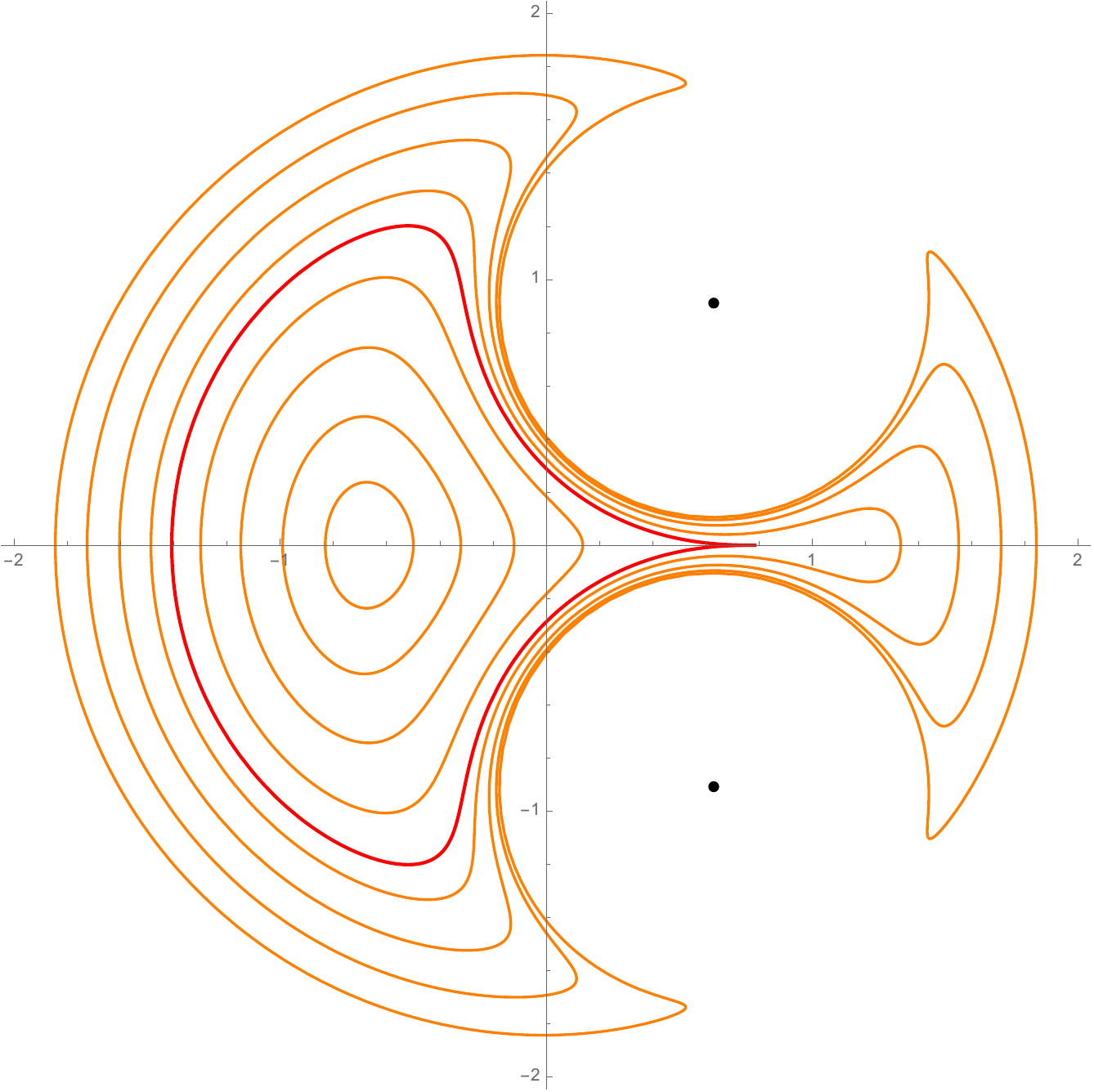}
\end{center}
\caption{Droplets pertaining to $Q_t$; one of them has a $(5,2)$-cusp.}
\label{fig7}
\end{figure}
\end{ex}

\section{Exterior estimates near singular points: Proof of Theorem \ref{THII}} \label{Ext2}
The goal of this section is to prove the estimate $R(z)\le Ce^{-2(|x|-T)^2}$ in Theorem \ref{THII}, where $R$ is the suitably rescaled $1$-point function
near a singular point and $x=\re z$. The important point to bear in mind is that, if we restrict to $z$ with $|z|\le M$ for some large $M$,
then the rescaled droplet is  a good approximation of the strip $-T\le \re z\le T$.

\smallskip

We start with the case, where the boundary $\d S$ has an ordinary $(\nu,2)$-cusp
at the point $0$, pointing in the negative real direction. Here $\nu\ge 4$ is an
integer. The case of a double point is rather more trivial, and will be handled
afterwards.

Fix $\parat>0$ and a large integer $n$ and write $\delta_n=\parat/\sqrt{n\Lap Q(0)}$. In the following, we consider the non-rescaled droplet to sit in the $\zeta=\xi+i\eta$ plane.

Let $p_n\in \Int S$ be the unique point closest to $0$ such that
$D(p_n;\delta_n)\subset S$; this means that the boundary circle $\{|\zeta-p_n|=\delta_n\}$ is tangent to $\d S$ at two points (see Figure \ref{fig: moving} or Figure \ref{qnc}).

Let $q_n$ denote one of the two points in $\{|\zeta-p_n|=\delta_n\}\cap(\d S)$, say, the upper one, as in Fig. \ref{qnc}.

Notice that both $p_n$ and $q_n$ converge to $0$ as $n\to\infty$, and that we may for example replace $\Lap Q(q_n)$ by $\Lap Q(0)$ with a vanishing error in the limit, as $n\to\infty$.

We shall start by deducing the asymptotic relation
\begin{equation}\label{eq:est-pn-qn}
\babs{\,p_n\,}\gtrsim n^{-1/\nu},\qquad n\to\infty
\end{equation}

\begin{figure}[ht]
\begin{center}
\includegraphics[width=.3\textwidth]{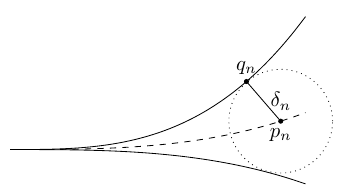}
\end{center}
\caption{$(\nu,2)$-cusp with odd $\nu$.}
\label{qnc}
\end{figure}

To prove the estimate \eqref{eq:est-pn-qn}, we consider first the case when $\nu$ is odd. In this case the cusp is "symmetric" as in Figure \ref{qnc}, i.e., there exists a number $c>0$ such that $\d S$ takes the form
\begin{equation}\label{straightcusp}\eta=\pm c \xi^{\nu/2}(1+o(1)),\qquad (\zeta=\xi+i\eta).\end{equation}
(Cf. \eqref{wefind}.)
This implies that the point
$p_n$ is approximately real and positive for large $n$.

As the distance from a point $\xi\in\R_+$ to the curve \eqref{straightcusp}
is no larger than the vertical distance, which is $c\xi^{\nu/2}(1+o(1))$, it follows that
$|p_n|^{\nu/2}\gtrsim \delta_n=O(n^{-1/2})$. 

Next the case when $\nu$ is even, i.e., when the cusp is bent as in the right hand picture in Figure \ref{fig: moving}.
Let the graph $\eta=f(\xi)$, $\xi\ge 0$ parameterize the arc of the cusp which lies farthest from the $\xi$-axis (the upper arc in
Fig. \ref{fig: moving}).

We roughly
estimate $\babs{\,p_n\,}$ as follows. It is evident that $\babs{\,p_n\,}\ge \xi_n$, where $\xi_n\in\R_+$ is the unique point closest to the origin, which lies at a distance $\delta_n$
from
union of the two curves
$
\{\eta=\pm f(\xi);\, \xi\ge 0\}.
$
These curves define a symmetric cusp, to which our earlier argument applies. As a result, we find that $\xi_n\gtrsim n^{-1/\nu}$. From this it follows directly that $\babs{\,p_n\,}\gtrsim n^{-1/\nu}$,
whence
\eqref{eq:est-pn-qn} is shown also for bent cusps.

A few remarks are in order. First, if $\nu$ is odd, an examination of the above argument shows that we have the stronger asymptotic $|p_n|\asymp n^{-1/\nu}$.
If $\nu$ is even, $|p_n|$ might be larger, but there is always an $\epsilon>0$ (depending on the cusp) such that $n^\epsilon|p_n|\to 0$ as $n\to\infty$.
Finally, as $|q_n|=|p_n|+O(n^{-1/2})$, corresponding estimates hold also for $|q_n|$.

\smallskip

After these preliminaries, we rescale about $q_n$ as follows. Let $e^{i\theta_n}$ be the outer normal to $\d S$ at $q_n$ and put
\begin{equation}\label{re0}z=z(\zeta)=e^{-i\theta_n}\sqrt{n\Lap Q(0)}\,(\zeta-q_n).\end{equation}
Then the rescaled droplet (restricted to a compact subset of the $z$-plane) looks roughly like the strip
$$-2T<\re z<0,$$
and $z(q_n)=0$.

Let $\Phi$ be a conformal map $\C_+\to U$ where $\C_+$ is the upper half-plane
and $U$ is the component of $S^c$ containing $\infty$. We assume that
$\Phi(0)=0$ and
\begin{equation}\label{via}\zeta=\Phi(\lambda)=\frac 1 2 \lambda^2+O(\lambda^3),\qquad \lambda\to 0.\end{equation}
As before, $\Phi(\R)$ parameterizes the outer boundary $\d U$ of $S$, and since the cusp at $0$ is conformal, $\Phi$ extends analytically across $\R$.

Now let $\sigma_n$ be the point in $\R$ such that $\Phi(\sigma_n)=q_n$. We assume without loss that $\sigma_n>0$.

Locally near
the point $q_n$ there is an inverse mapping to $\Phi$ of the form
\begin{equation}\label{duct}\lambda=\Phi^{-1}(\zeta)=(2\zeta)^{1/2}+O(\zeta).\end{equation}

Now fix a point $z$ with $|z|\le \log n$ and put
\begin{equation}\label{nyen}z_n=\frac {ze^{i\theta_n}}{\sqrt{n\Lap Q(0)}},\qquad \zeta=q_n+z_n.\end{equation}
Then the relation $z=z(\zeta)$ in \eqref{re0} holds and $|z_n|\lesssim(\log n)/\sqrt{n}$.

At this point, we note the following gradient bound for $\Phi^{-1}$,
\begin{equation}\label{gradbound}\sup_{\zeta\in D_n}|(\Phi^{-1})'(\zeta)|\lesssim n^{1/2\nu},\qquad D_n:=D\left(q_n;\frac {\log n}{\sqrt{n\Lap Q(0)}}\right).
\end{equation}
To prove this it suffices to note that for $\zeta\in D_n$ we have
$(\Phi^{-1})'(\zeta)=(2\zeta)^{-1/2}+O(1)$ and
$|\zeta|\gtrsim n^{-1/\nu}$ by \eqref{duct} and \eqref{eq:est-pn-qn}, respectively.

Next we define the complex number $\eps_n=\alpha_n + i\beta_n$ by $\sigma_n+\eps_n=\Phi^{-1}(\zeta)=\Phi^{-1}(q_n+z_n)$.
Then by \eqref{gradbound} we obtain immediately
\begin{equation}\label{til}|\,\varepsilon_n\,|=|\Phi^{-1}(q_n+z_n)-\Phi^{-1}(q_n)|\lesssim n^{1/2\nu}|z_n|\lesssim \frac {\log n} {n^{1/2-1/(2\nu)}},\end{equation}
and hence (since $\nu\ge 4$)
\begin{equation}\label{rak}n\,|\,\varepsilon_n\,|^3\lesssim
n\frac {\log^3 n}{n^{3/2-3/(2\nu)}}=\frac {\log^3 n}{n^{(\nu-3)/(2\nu)}}
\to 0,\quad (n\to\infty).\end{equation}

By \eqref{via} and \eqref{eq:est-pn-qn} we have the estimates
$|\Phi'(\sigma_n)|\asymp \sigma_n\asymp|q_n|^{1/2}\gtrsim n^{-1/2\nu}$ and by \eqref{til} we see that $|\eps_n|=o(\sigma_n)$ so
Taylor's formula gives that, as $n\to\infty$,
$$|\sigma_n\eps_n|\asymp |\sigma_n\eps_n+O(\eps_n^2)|\asymp |\Phi(\sigma_n+\eps_n)-q_n|=|z_n|\lesssim (\log n)/\sqrt{n}.$$
From this we draw the conclusion that
\begin{equation}\label{beno}n|\Phi'(\sigma_n)|^2\beta_n^2\lesssim n|\sigma_n \eps_n|^2\lesssim \log^2 n.\end{equation}

After these observations, we now prove the required decay about the moving point $q_n$.

\begin{lem} \label{l02} Let $R_n(z)$ be the rescaled $1$-point function according to
the rescaling \eqref{re0}. There is then a constant $C$ such that
$R_n(z)\le Ce^{-2x^2}$ when $\babs{z}\le \log n$ and $x=\re z>0$.
\end{lem}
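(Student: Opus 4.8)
The strategy is to repeat the argument of Theorem \ref{2est} (and of Section \ref{fbp}), but carried out in the coordinate $\tilde\lambda$ centered at $\sigma_n$ rather than at the cusp point $0$. We start from the universal upper bound $\bfR_n(\zeta)\le Cne^{-n(Q-\eqpot)(\zeta)}$, so that in rescaled coordinates $R_n(z)\le Ce^{-nM(\lambda)}$ where $M=(Q-\eqpot)\circ\Phi$ and $\Phi(\lambda)=\zeta$. Writing $\lambda=\sigma_n+\tilde\lambda$, the task is to show that, under the constraint \eqref{til}, one has $nM(\sigma_n+\tilde\lambda)=2x^2+o(1)$ when $|z|\le\log n$ and $\re z<0$ (the point $\zeta$ is then in $S^c$, which is where the estimate on $\bfR_n$ is applied).

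The key inputs are, first, the expansion \eqref{lal} for $M$ near a boundary real-point, which remains valid with $\sigma_n$ in place of $0$ since $\Phi$ is real-analytic across $\R$ near the cusp; and second, the analogue of \eqref{yul} relating $x=\re z$ to the geometry. Concretely, since $q_n=\Phi(\sigma_n)$ is a \emph{regular} boundary point of $S$, the second tangential derivative of $M$ at $q_n$ vanishes, so (as in \eqref{mju}) the quadratic part of $M$ in the rescaled variable is exactly $2x^2$ — here one uses $(\d_s^2+\d_n^2)M=4\Lap Q$ together with $\d_s^2 M=0$ at a regular point. One must also track that the linear term in $M$ along the normal direction vanishes on $\d S$, which is immediate from $Q=\eqpot$ on $S$ and smoothness of $\d\eqpot$ across $\d S$. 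The error term in the Taylor expansion of $nM$ is of order $n\,|\Phi'(\sigma_n)|^3\,|\tilde\lambda|^3$ times a bounded third derivative; since $|\Phi'(\sigma_n)|\sim n^{-1/(2\nu)}$, this is $\sim n\,|\tilde\lambda|^3\cdot n^{-3/(2\nu)}$, and by \eqref{rak} (valid because $\nu\ge 5$) this tends to $0$. Note that the curvature of $\d S$ at $q_n$ is $\sim |q_n|^{-1/2}\sim n^{1/(2\nu)}$, which contributes a correction of the same admissible order, so it can be absorbed into the same error estimate.

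Assembling these: $nM(\sigma_n+\tilde\lambda) = 2x^2 + O\!\big(n|\Phi'(\sigma_n)|^3|\tilde\lambda|^3\big) + (\text{curvature correction}) = 2x^2 + o(1)$ uniformly for $|z|\le\log n$, and feeding this into $R_n(z)\le Ce^{-nM(\lambda)}$ (applied at points $\zeta\in S^c$, i.e. $\re z<0$) yields $R_n(z)\le Ce^{-2x^2}$. The sequential limiting statement then passes to the limit as in Lemma \ref{l0}. The main obstacle — and the reason $\nu\ge 5$ is needed — is the bookkeeping showing that all the error terms coming from (i) the higher Taylor coefficients of $\Phi$ near the cusp, (ii) the nonvanishing curvature of $\d S$ at the moving point $q_n$, and (iii) the third-order remainder in $M$, are all $o(1)$ after multiplication by $n$; this hinges precisely on the estimates \eqref{til} and \eqref{rak}, where the exponent $(\nu-3)/(2\nu)>0$ exactly when $\nu\ge 5$ (odd, so the cusp is genuine).
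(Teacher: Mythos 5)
Your proof follows the same strategy as the paper: start from $\bfR_n(\zeta)\le Cne^{-n(Q-\eqpot)(\zeta)}$, pass to the conformal coordinate $\lambda=\sigma_n+\tilde\lambda$, use the expansion \eqref{lal} for $M=(Q-\eqpot)\circ\Phi$ about $\sigma_n$ together with the Taylor expansion of $\Phi$, and conclude $nM=2x^2+o(1)$ via \eqref{rak}. The conclusion is correct and the key ingredients are correctly identified. Two points of bookkeeping are off, however, and worth flagging even though they do not break the argument here. First, the claim that the remainder in the Taylor expansion of $nM$ is of order $n\,|\Phi'(\sigma_n)|^{3}\,|\tilde\lambda|^{3}$ is too optimistic: by the chain rule, $D^{3}M$ contains the term $D^{2}(Q-\eqpot)\,\Phi''\Phi'$, which at $\sigma_n$ is of order $|\Phi'(\sigma_n)|$, not $|\Phi'(\sigma_n)|^{3}$ (the cubic power only appears in the $D^{3}(Q-\eqpot)\,(\Phi')^{3}$ piece, which is subdominant). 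The paper sidesteps this by using the cruder but safe bound $O(n|\tilde\lambda|^{3})$, which is what \eqref{rak} actually controls, and that is the bound you in fact invoke; so the conclusion stands, but the stated order of the error is wrong. Second, the ``curvature correction'' you introduce is spurious: since $\nabla(Q-\eqpot)=0$ on $\d S$, differentiating $M\circ\gamma\equiv 0$ along an arc-length parametrization $\gamma$ of $\d S$ twice gives $\gamma'^{\,T}(\mathrm{Hess}\,M)\gamma'+\nabla M\cdot\gamma''=0$ with the second term vanishing, so $\d_s^2 M=0$ exactly on $\d S$ with no curvature term. Neither issue affects the validity of the proof, because both phantom contributions are already dominated by the $O(n|\tilde\lambda|^{3})$ error that \eqref{rak} kills, but they would matter if one tried to push the method to $\nu=3$.
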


\begin{proof}
We know that $\bfR_n(\zeta)\le Cne^{-n(Q-\check{Q})(\zeta)}$. Since $\zeta=\Phi(\sigma_n+\eps_n)$,
it will suffice to show that
\begin{equation}\label{check}2\,x^2=n(Q_\Phi-\check{Q}_\Phi)(\sigma_n+\varepsilon_n)+o(1),\end{equation}
where $z=x+iy$ is related to $\zeta$ via \eqref{re0}.

However, by the estimate \eqref{lal} we have
$$n(Q_\Phi-\check{Q}_\Phi)(\sigma_n+\varepsilon_n) =
2n\Delta Q(\Phi(\sigma_n + \alpha_n))|\Phi'(\sigma_n+\alpha_n)|^2 \beta_n^2 + O(n\beta_n^3).$$
It hence follows from \eqref{til}~--~\eqref{beno} that
\begin{equation}\label{zock}n(Q_\Phi-\check{Q}_\Phi)(\sigma_n+\varepsilon_n)=2n\Delta Q(0) \babs{\Phi'(\sigma_n)}^2 \beta_n^2+o(1).\end{equation}
Indeed (using also the fact that $\sigma_n{\cdot \log^2 n}\to0$ as $n\to\infty$) we have
\begin{align*}n\Delta Q(\Phi(\sigma_n + \alpha_n))|\Phi'(\sigma_n+\alpha_n)|^2 \beta_n^2 &- n\Delta Q(0)\babs{\Phi'(\sigma_n+\alpha_n)}^2 \beta_n^2\\
& =  O(n|\sigma_n + \alpha_n|^3\beta_n^2) = o(1).
\end{align*}
Similarly, it is straightforward to check that
\begin{align*}n\Delta Q(0)\babs{\Phi'(\sigma_n+\alpha_n)}^2 \beta_n^2& - n\Delta Q(0)\babs{\Phi'(\sigma_n)}^2 \beta_n^2 \\
&= O(n|\sigma_n\alpha_n|\beta_n^2) = o(1).
\end{align*}
Inserting in \eqref{nyen} the Taylor expansion of $\Phi$ about $\sigma_n$ we find successively that
\begin{align*}z&=e^{-i\theta_n}\sqrt{n\Delta Q(0)}\left(\Phi'(\sigma_n)(\alpha_n+i\beta_n)+O(|\varepsilon_n|^2)\right), \qquad e^{i\theta_n} = i \frac{\Phi'(\sigma_n)}{|\Phi'(\sigma_n)|},\\
x&=|\Phi'(\sigma_n)|\cdot \sqrt{n\Lap Q(0)}\cdot \beta_n+O(\sqrt{n}|\eps_n|^2).\end{align*}
Since $n|\eps_n|^3\to 0$ (see \eqref{rak}) we infer that
$$2x^2=2n\Delta Q(0) \babs{\Phi'(\sigma_n)}^2 \beta_n^2+o(1).$$
This finishes the proof of the lemma, in view of \eqref{zock}.
\end{proof}

We turn to the estimate \eqref{damp} in Theorem \ref{THII}.

Recall the point $p_n$ defined at the beginning of the preceding subsection, i.e., the
closest point $p_n\in\Int S$ to $0$, having distance $T/\sqrt{n\Lap Q(0)}$ to the boundary.

We now slightly modify the rescaling so that the point $p_n$ is mapped to the origin instead of $q_n$,
$$z=e^{-i\theta_n}\sqrt{n\Lap Q(0)}(\zeta-p_n),$$
and we write $R_n$ for the corresponding rescaled $1$-point function. Since
$$\babs{\,p_n-q_n\,}=
T/\sqrt{n\Lap Q(0)}$$
we obtain from Lemma \ref{l02} the estimate
\begin{equation}\label{moby}R_n(z)\le Ce^{-\,2\,\left(\,\babs{\,x\,}\,-\,T\,\right)^{\,2}},\qquad |z|\le\log n,\quad x=\re z.\end{equation}
This finishes the proof of Theorem \ref{THII}, eq. \eqref{moby}, in the case of a cusp.

There remains only to treat the case of a double point. This follows as in the case of a regular boundary point, using the
estimate in \cite[Lemma 5.5]{AKM}, which works in both directions normal to the boundary near the
double point. After all, the double point is just another interior point of each of the analytic arcs which meet at the double point, see Fig. \ref{fig1}.
Hence the estimate $R_n(z)\le Ce^{-2(|x|-T)^2}$ follows easily for the 1-point function $R_n$ rescaled about
one of the points $0_n'$ or $0_n''$ appearing in Theorem \ref{THII}.

By this, the estimate $R(z)\le Ce^{-2(|x|-T)^2}$ in Theorem \ref{THII} is completely proved. $\qed$

\section{Free boundary ensembles} \label{PMR}
We shall now prove Theorems \ref{THI}-\ref{THIV}.

\subsection{The triviality theorem}
We now prove Theorem \ref{THI}. Suppose that $p$ is either a double point or a cusp of type $(\nu,2)$ where
$\nu>3$ and that $\Lap Q(p)>0$ and rescale about $p$ according to
$$z=e^{-i\theta}\sqrt{n\Lap Q(p)}(\zeta-p)$$
where $e^{i\theta}$ is one of the normal directions to $\d S$ at $p$.

Let $K=G\Psi$ be a limiting kernel. We must prove that the limiting $1$-point function $R(z)=K(z,z)=\Psi(z,z)$ vanishes identically. To this end, we shall use the corresponding holomorphic kernel
$$L(z,w)=e^{z\bar{w}}\Psi(z,w).$$

We now call on the result in \cite[Lemma 4.3]{AKM}, which says that
the function $S(z):=|z|^2+\log R(z)$ is subharmonic. Combining this with the estimate $R(z)\le Ce^{-2x^2}$ for some constant $C$, obtained in
Lemma \ref{2est} for cusps (the same estimate holds at a double point, with a much easier proof), we
deduce the bound
$$S(z)\le \log C+y^2-x^2.$$
But $y^2-x^2$ is harmonic, so the function $\tilde{S}=S-(y^2-x^2)$ is subharmonic and bounded above by $\log C$. Hence it is constant, i.e.,
$$R(z)=Ce^{-\,2\,x^{\,2}}$$
for a (new) constant $C$. If $R$ is nontrivial we can assume that $C=1$. By polarization, then
$$\Psi(z,w)=e^{-(z+\bar{w})^{\,2}/2},$$
so the kernel $L(z,w)=e^{z\bar{w}}\Psi(z,w)$ must satisfy
\begin{align*}\int\babs{\,L(0,w)\,}^{\,2}e^{-\,|\,w\,|^{\,2}}\, dA(w)=\int\babs{\,\Psi(0,w)\,}^{\,2}e^{-\,|\,w\,|^{\,2}}\, dA(w)=\int e^{-\,2\,x^{\,2}}\, dA=\infty.
\end{align*}
This contradicts the mass-one inequality \eqref{moi}, so we must have $C=0$. $\qed$

\smallskip

We are grateful to H. Hedenmalm for communication in connection with the above proof, \cite{Hed}.

\subsection{Proof of the existence theorems}
We now prove Theorem \ref{THII} and Theorem \ref{THIII}.

Let $p$ be either a $(\nu,2)$-cusp with $\nu\ge 4$ or a double point.
In both cases we assume that $\Lap Q(p)>0$. Also fix a number $T> 0$. For a given $n\in\Z_+$, we let $p_n$
be a point in $S$ whose distance to the boundary is $T/\sqrt{n\Lap Q(p)}$ and whose distance
to $p$ is minimal.

We rescale about
$p_n$,
$$z_j=e^{-i\theta_n}\sqrt{n\Lap Q(p)}(\zeta_j-p_n),\qquad j=1,\ldots,n,$$
where the angle $\theta_n$ is chosen so that the image of the cusp point $p$ lies
on the positive imaginary axis.

Note that as $n\to\infty$, the
image of $S$ near $p_n$ looks approximately like the strip
\begin{equation}\label{strip}\Sigma_T:\qquad  -T <\re z<T.\end{equation}

Let $K_n$ be a correlation kernel of the rescaled system $\config_n=\{z_j\}_1^n$. We write
$R_n(z)=K_n(z,z)$. By Lemma \ref{l0} we know that
there is a sequence of cocycles $c_n$ such that every subsequence of $c_nK_n$ has a subsequence converging to $G\Psi$ where $\Psi$ is some
Hermitian entire function. It remains only to show that the function $R(z)=\Psi(z,z)$ does not vanish identically if $T$ is large enough.

To this end, we shall use the following estimate found in \cite[Theorem 5.4]{AKM},
$$\babs{\,\bfR_n(\zeta)-n\Lap Q(\zeta)\,}\le C\left(1+ne^{-n\ell\Lap Q(\zeta)\cdot \delta(\zeta)^2}\right),\quad \zeta\in S,$$
where $\ell$ is a positive constant
and $\delta(\zeta)=\dist(\zeta,\d S)$. If we choose $\zeta=p_n$ where $\delta(p_n)=T/\sqrt{n\Lap Q(p_n)}$, we obtain
for the rescaled $1$-point function $R_n$ that
$$\babs{\, R_n(0)-1\,}\le C(1/n+e^{-\ell T^2}).$$
Choosing $n$ and $T$ sufficiently large that the right hand side is strictly less than $1$, we obtain that $R(0)>0$. By Lemma \ref{l0} we then have $R>0$ everywhere on $\C$.
$\qed$

\subsection{Translation invariant candidates} We finally prove Theorem \ref{THIV}. Suppose the $1$-point function $R(z)=\Phi(z+\bar{z})$ is translation invariant.
If $R$ is nontrivial, then $R$ gives rise to a solution to Ward's equation by Lemma \ref{l0}. Hence we can use \cite[Theorem 1.6]{AKM} to conclude that $\Phi$ has the structure
$$\Phi(z)=\gamma*\1_I(z)=\frac 1 {\sqrt{2\pi}}\int_I e^{-(z-t)^2/2}\, dt,$$
where $I\subset \R$ is an interval of positive measure. By the estimate $R(z)\le Ce^{-2(|z|-T)^2}$ (Theorem \ref{THII}), we see
that $I$ must be included in the interval $[-2T,2T]$. $\qed$

\section{Hard edge point fields in a strip} \label{hard_edge}

In this section, we prove Theorems \ref{THV}, \ref{THVI}, and \ref{TH8}. For this, we fix a parameter $\parat>0$ and let $\Sigma_\parat$
denote the symmetric strip of width $2\parat$
$$
\Sigma_\parat =\{z = x+iy\,|\, x\in [-\parat,\parat]\}.
$$

\subsection{Some preliminaries}

Given a Hermitian entire function $\Psi$, we put
\begin{align}R(z)&=\Psi(z,z)\cdot\1_{\Sigma_\parat}(z),\\
D(z)&=\int_{\Sigma_\parat}\frac {e^{-|z-w|^2}}{z-w}\babs{\,\Psi(z,w)\,}^{\,2}\, dA(w).
\end{align}
Note that
$D=RC$ on $\Sigma_\parat$, where $C(z)$ is the Cauchy transform defined in \eqref{all2}.

\begin{lem} \label{Ward:4} Ward's equation  is satisfied on $\Int \Sigma_\parat$ if and only if there is a smooth function
$P$ on $\Int {\Sigma_\parat}$ such that
\begin{equation}\label{Ward:5}\dbar P=R-1\quad \text{and}\quad D=PR-\d R\quad \text{on}\quad \Int {\Sigma_\parat}.\end{equation}
\end{lem}

\begin{proof} Ward's equation means that
\begin{equation}\label{Ward:6}\dbar(D/R)=R-1-\dbar(\d R/R)\quad \text{on}\quad {\Sigma_\parat}.\end{equation} If we let
$P_0$ be an arbitrary solution to $\dbar P_0=R-1$, then this can be written
\begin{equation*}\dbar\left(\frac D R-P_0+\frac {\d R} R\right)=0\quad \text{on}\quad{\Sigma_\parat}.\end{equation*}
The last identity means that there is a holomorphic function $E$ on ${\Sigma_\parat}$ such that
$D-P_0R+\d R=ER$. Letting $P=P_0+E$ we now see that the conditions in \eqref{Ward:5} are satisfied.
Conversely, if the conditions in \eqref{Ward:5} hold, then \eqref{Ward:6} holds since
$\dbar(D/R)=\dbar(P-\d R/R)=R-1-\dbar(\d R/R)$.
\end{proof}

Next we assume translation invariance $\Psi(z,w)=\Phi(z+\bar{w})$ and introduce the function
$$L(x):=D(x/2)$$ for $x\in (-2\parat,2\parat)$, so
\begin{equation}\label{ldef}L(x)=-\int_{\Sigma_\parat-x/2}\frac {e^{-|w|^2}}w\babs{\Phi(x+w)}^2\, dA(w).
\end{equation}

\begin{lem}\label{303} An error type-function $\Phi=\gamma*\fii$ gives rise to a solution to Ward's equation
if and only if there is a smooth function $G(x)$ ($x\in I$) of the form
\begin{equation}\label{gform}G=\gamma*g,\qquad \fii=g'+1,\end{equation}
such that
\begin{equation}\label{ttw}L=G\Phi-\Phi'\quad \text{and}\quad G'=\Phi-1\quad \text{on}\quad I.
\end{equation}
\end{lem}

\begin{proof} It is easily seen that the equation \eqref{ttw} is the same as \eqref{Ward:5}, where $G(x)=P(x/2)$.
In particular, we have $G'=\Phi-1=\gamma*(\fii-1)$. Taking primitive functions, it follows that $G=\gamma*g$ where $g$ is some function
with $g'=\fii-1$. This proves \eqref{gform}.
\end{proof}

\subsection{The Gaussian semi-group} We will use the Fourier transform with normalization
$$\calF\left[f\right](t)=\hat{f}(t)=\int_{-\infty}^{+\infty}f(x)e^{-itx}\, dx.$$
Hence $\calF\left[f*g\right]=\hat{f}\hat{g}$ where "$*$'' is the usual convolution product in $\R$.

Let $\chi_a(x)=e^{-ax^2/2}=(\sqrt{2\pi}\gamma(x))^a$, where $a>0$, $\gamma(x)=\frac 1 {\sqrt{2\pi}}e^{-x^2/2}$.
We have that $\hat{\gamma}=\sqrt{2\pi}\gamma$ and more generally $\hat{\chi}_a=\sqrt{\frac {2\pi} a}\chi_{1/a}$. Hence
$$\chi_{1/a}*\chi_{1/b}=c\chi_{1/(a+b)},$$ where $c=\sqrt{\frac {2\pi ab}{a+b}}$.

\subsection{Generalized Fourier transform and analytic continuation} \label{Fourier:sec} If $g$ is a suitable test-function on $\R$
(e.g. $g\in L^\infty(\R)$), the convolution
$$G(z):=\gamma*g(z)=\int_{-\infty}^{+\infty}\gamma(z-t)g(t)\, dt$$
defines an entire function, which is the analytic continuation of $\gamma*g(x)$ to $\C$. For a function $G$
of this form, we define the Fourier transform by
$$\hat{G}(t):=\hat{\gamma}(t)\hat{g}(t)=\sqrt{2\pi} \gamma(t)\hat{g}(t).$$

By Fourier's inversion formula, we then have
$$G(z)=\frac 1 {2\pi}\int_{-\infty}^{+\infty}\hat{G}(t)e^{izt}\, dt.$$
This can be seen as another method of analytic continuation.

It follows that for suitable test-functions (or tempered distributions) $g$ we have
\begin{equation}\label{an:cont}\int_\R\gamma(z-t)g(t)\, dt=\frac 1 {\sqrt{2\pi}}\int_\R \gamma(t)\hat{g}(t)e^{izt}\, dt.
\end{equation}

\subsection{Faddeeva's formula} Consider the complementary error function:
$$\erfc(z)=\frac 2 {\sqrt{\pi}}\int_z^{+\infty}e^{-t^2}\, dt.$$
The following theorem is well-known in the plasma literature (e.g. \cite{FC}).

\begin{thm} (Faddeeva's formula)
$$\frac i \pi\int_{-\infty}^{+\infty}\frac {e^{-t^2}}{z-t}\, dt=\begin{cases}
e^{-z^2}\erfc(-iz),& \im z>0,\cr
-e^{-z^2}\erfc(iz),& \im z<0.\cr
\end{cases}$$
\end{thm}

The theorem follows easily by integration by parts and the observation that
$$\frac 1 {z-t}=(-2i)\int_0^\infty e^{2i(z-t)u}\, du,\qquad (\im z >0).$$

It is instructive to give an alternative argument, based on the formula \eqref{an:cont}.

\begin{proof}[Proof of Faddeeva's formula] Let $g=\1_{(-\infty,0)}$. The Fourier transform is $\hat{g}(t)=\frac i {t-0i}$.
Inserting this into \eqref{an:cont}, we find that
$$\int_{-\infty}^0\gamma(iz-t)\, dt=\frac i{\sqrt{2\pi}}\int_\R \frac {\gamma(t)} {t-0i} e^{-zt}\, dt.$$
Here the left hand side is $F(iz)=\frac 1 2 \erfc\left(iz/\sqrt{2}\right)$ while the right hand side is
$$\frac 1 {\sqrt{2\pi}}\int_\R \frac {\gamma(t)} {t-0i}e^{-zt}\, dt=\frac i {2\pi}e^{z^2/2}\int_\R \frac {e^{-(t+z)^2/2}}{t-0i}\, dt=\frac i {2\pi}
e^{z^2/2}\int_\R \frac {e^{-t^2/2}}{t-z}\, dt,\quad(\im z<0),$$
where the last equality can be justified using Cauchy's theorem.
We have shown that
$$\frac i {2\pi}\int_\R \frac {e^{-t^2/2}}{z-t}\, dt=-\frac 1 2e^{-z^2/2}\erfc\left(iz/\sqrt{2}\right)\quad \text{when}\quad \im z<0.$$ This
is equivalent to Faddeeva's formula.
\end{proof}

\subsection{Auxiliary identities} \label{plasma:functions} We will use two {\em plasma-functions}:
\begin{align*}
F(z)&=\gamma*\1_{(-\infty,0)}(z)=\frac 1 2 \erfc\left(\frac z {\sqrt{2}}\right),\\
E(z)&=\gamma*F(z)=\frac 1{2}\erfc\left(\frac z 2\right).
\end{align*}

Recall that by Lemma \ref{303}, a holomorphic function $\Phi$ gives rise to a solution to Ward's equation
if and only if there is a smooth function $G(x)$ $(-2\parat<x<2\parat)$ such that
\begin{equation}\label{Ward:7}L=G\Phi-\Phi'\quad \text{and}\quad G'=\Phi-1\quad \text{on}\quad (-2\parat,2\parat).
\end{equation}

We shall need to compute the "transforms"
\begin{equation}K(x,s,t):=\int_{\L-x/2}\frac {e^{-|w|^2}}{w}e^{iwt}e^{i\bar{w}s}\, dA(w),\qquad(x,s,t\in\R)
\end{equation}
and
\begin{equation}K_\parat(x,s,t):=\int_{{\Sigma_\parat}-x/2}\frac {e^{-|w|^2}}{w}e^{iwt}e^{i\bar{w}s}\, dA(w),\qquad(x\in(-2\parat,2\parat),s,t\in\R).
\end{equation}
For $x\in(-2\parat,2\parat),$ we have
$$K_\parat(x,s,t) = K(x-2\parat,s,t) - K(x+2\parat,s,t).$$
\begin{lem} \label{Fourier:2}
We have that
\begin{equation*}
iK(x,s,t)=
\begin{cases}
\dfrac {e^{-st}}{s} E(x+i(s+t))
-\dfrac {e^{-isx}}{s}E(x+i(t-s)), &(x\ge 0),\\
&\\
\dfrac {e^{-st}}{s} E(x+i(s+t))
-\dfrac {e^{-isx}}{s}E(x+i(t-s))+ \dfrac {e^{-isx}-1}s, \,\, &(x\le 0).
\end{cases}
\end{equation*}
In particular,
\begin{equation}\label{FT:4}iK(x,0,t)=
\begin{cases}
i(x+it)E(x+it)+2iE'(x+it),\quad &(x\ge 0),\\
i(x+it)E(x+it)+2iE'(x+it)-ix,\quad &(x\le 0).
\end{cases}
\end{equation}
\end{lem}

\begin{proof}
Note that, with $w=u+iv$,
\begin{align*}K&=\frac 1 \pi\int_{-\infty}^{-x/2}e^{-u^2+iu(t+s)}\, du
\,\int_{-\infty}^{+\infty}\frac {e^{-v^2-v(t-s)}}
{u+iv}\, dv\\
&=\frac 1 \pi e^{(t-s)^2/4}\int_{-\infty}^{-x/2}e^{-u^2+iu(t+s)}\, du
\,\int_{-\infty}^{+\infty}\frac {e^{-(v+(t-s)/2)^2}}
{u+iv}\, dv.
\end{align*}
Writing $\xi=v+(t-s)/2$,
the inner integral becomes (say, if $u< 0$)
\begin{align*}
\int_{-\infty}^{\infty}&\frac {e^{-\xi^2}}{u+i(\xi-(t-s)/2)}\, d\xi\\
&=i\int_{-\infty}^{\infty}\frac {e^{-\xi^2}}{(t-s)/2+iu-\xi}\, d\xi\\
&=-\pi e^{-((t-s)/2+iu)^2}\erfc\left[i\left((t-s)/2-u\right)\right],
\end{align*}
where we have used Faddeeva's formula.
It follows that
\begin{equation}\label{FT:5}K=-\int_{-\infty}^{-x/2}e^{2isu}\erfc(-u+i(t-s)/2)\, du,\quad (x\ge 0).\end{equation}
For $x\le 0$ we have instead
\begin{equation*}
K=-\int_{-\infty}^0e^{2isu}\erfc(-u+i(t-s)/2)\, du+
\int_{0}^{-x/2}e^{2isu}\erfc(u+i(s-t)/2)\, du,\quad (x\le 0).\end{equation*}

Now observe that, for $x\ge 0$, \eqref{FT:5} implies
\begin{align*}-K(x,t,s)&=\int_{-\infty}^{-x/2}e^{2isu}\erfc(-u+i(t-s)/2)\, du\\
&=\left[\frac {e^{2isu}}{2is}\erfc(-u+i(t-s)/2)\right]^{u=-x/2}_{u=-\infty}-\int_{-\infty}^{-x/2}\frac {e^{2isu}}
{2is}\frac 2 {\sqrt{\pi}}e^{-(u+i(s-t)/2)^2}\, du\\
&=\frac {e^{-isx}}{2is}\erfc\left(\frac {x+it-is} 2\right)-\frac 1 {is\sqrt{\pi}}e^{-st}\int_{-\infty}^{-x/2}
e^{-(u+i(s+t)/2)^2}\, du\\
&=\frac {e^{-isx}}{2is}\left(2-\erfc\left(\frac {x+is-it} 2\right)\right)-\frac
{e^{-st}}{2is}\erfc\left(\frac{x+it+is} 2\right)\\
&=\frac {e^{-isx}}{is}E(x-is+it)-\frac {e^{-st}}{is}E(x+it+is).
\end{align*}

We now assume that $x\le 0$ and write $\tilde{K}(x,s,t)=K(x,s,t)-K(0,s,t)$ so that
\begin{align*}\tilde{K}(x,s,t)&=\int_0^{-x/2}e^{2isu}\erfc(u+i(s-t)/2)\, du\\
&=\left[\frac {e^{2isu}}{2is}\erfc(u+i(s-t)/2)\right]_{u=0}^{u=-x/2}+
\int_0^{-x/2}\frac {e^{2isu}}{2is}\frac 2 {\sqrt{\pi}}e^{-(u+i(s-t)/2)^2}\, du\\
&=\frac {e^{-ixs}}{2is}\erfc\left(\frac {-x+is-it}2\right)-\frac 1 {2is}\erfc\left(\frac{is-it}2\right)\\
&+\frac {e^{-st}}{2is}\frac 2 {\sqrt{\pi}}\int_0^{-x/2}e^{-(u-i(s+t)/2)^2}\, du
\end{align*}

This means that
\begin{equation*}\tilde{K}(x,s,t)=\frac {e^{-ixs}}{is}(1-E(x+it-is))-\frac 1 {is}E(is-it)
+\frac {e^{-st}}{is}(E(x+it+is)-E(it+is)).\end{equation*}
The formula \eqref{FT:4} is immediate.
\end{proof}

Set
\begin{equation}\label{etau}E_\parat(z) := E(z-2\parat) - E(z+2\parat).\end{equation}
By the previous lemma, we have
\begin{align}\label{FT:3}
i&K_\parat(x,s,t)  = \frac {e^{-is(x-2\parat)}-1}s  + \frac {e^{-st}}{s} E_\parat(x+i(s+t))\\
&-\frac {e^{-is(x-2\parat)}}{s}E(x-2\parat+i(t-s)) +\frac {e^{-is(x+2\parat)}}{s}E(x+2\parat+i(t-s)) \nonumber
\end{align}
for $x\in(-2\parat,2\parat).$

\subsection{Translation invariant solutions to Ward's equation} We now prove Theorem \ref{THV}.

Let $g$ and $\fii$ be unknown functions (say in $L^\infty(\R)$)
and put
$$\Phi:=\gamma*\fii,\qquad G:=\gamma*g.$$

Then Ward's equation \eqref{Ward:7} is equivalent to the following system:
\begin{align}\label{Ward:8:1}L&=G\Phi-\Phi',\quad \text{and}\\
\label{Ward:8:2}\fii&=g'+1.\end{align}
We shall refer to the equations \eqref{Ward:8:1} and \eqref{Ward:8:2} as Ward's \textit{first} and \textit{second} equation, respectively.

Since $\Phi=G'+1$ we have
$$\hat{\Phi}(s)=2\pi\cdot \delta(s)+is\hat{G}(s),$$
where $\delta$ is Dirac measure at $0$.
Moreover, by Fourier's inversion formula, the function $L$ in \eqref{ldef} satisfies
\begin{equation}\label{FT:8}L(x)=\frac i{(2\pi)^2} \iint_{\R^2} e^{ix(s+t)}(iK_\parat)(x,s,t)\hat{\Phi}(s)\hat{\Phi}(t)\, dsdt.\end{equation}
It follows that
$$L=L_1+L_2,$$ where
\begin{align}\label{def:L1}L_1(x)&:=\frac i{2\pi}\int e^{ixt}(iK_\parat)(x,t,0)\hat{\Phi}(t)\, dt,\\
\label{l2}L_2(x)&:=-\frac 1 {(2\pi)^2}\iint se^{ix(s+t)}(iK_\parat)(x,s,t)\hat{G}(s)\hat{\Phi}(t)\, dsdt.\end{align}
In order to analyze this decomposition, we shall prove a few lemmas. In the sequel, we denote by
$\mu$ the operation of multiplication by the dependent variable,
\begin{equation}\label{def:mu}\left[\mu f\right](x):=x\cdot f(x).\end{equation}

\begin{lem} \label{Gauss:rep:2} Suppose that $\Phi=\gamma *\fii$.
Then
$$\mu \Phi=\gamma*\left[\mu \fii\right]-\Phi'.$$
\end{lem}

\begin{proof} Since $\gamma'(x)=-x\gamma(x)$ we have
$$\Phi'(x)=\int_\R \gamma'(x-t)\fii(t)\, dt=\int_\R(t-x)\gamma(t-x)\fii(t)\, dt=\gamma*\left[\mu \fii\right](x)-x\Phi(x).$$
The proof of the lemma is complete.
\end{proof}

The following lemma uses the plasma functions $E$ and $F$ of Section \ref{plasma:functions} as well as the functions
\begin{equation}\label{def:aA}a(x):=xF(x)-\gamma(x),\qquad A:=\gamma *a.\end{equation}

\begin{lem} We have $a'=F$ and $A(x)=xE(x)+2E'(x)$.
\end{lem}

\begin{proof} It is clear that $a'=F$. For the other statement we
 shall first prove that
\begin{equation}\label{FT:9}\mu E=\gamma*\left[\mu F+\gamma\right].\end{equation}
Indeed, since $E=\gamma *F$,
$$\calF\left[\mu E\right](\xi)=i\hat{E}'(\xi)=i\left(\hat{\gamma}\hat{F}\right)'(\xi)=
-i\xi\hat{\gamma}(\xi)\hat{F}(\xi)+i\hat{\gamma}(\xi)\hat{F}'(\xi),$$
so
$$\calF\left[\mu E\right](\xi)/\hat{\gamma}(\xi)=-i\xi\hat{F}(\xi)+i\hat{F}'(\xi)=\calF\left[-F'(x)+xF(x)\right](\xi),$$
establishing \eqref{FT:9}.
But $E'=-\gamma *\gamma$, so by \eqref{FT:9}, we get
$$\mu E+2E'=\gamma*\left[\mu F+\gamma\right]-2\gamma *\gamma=\gamma* a.$$
The proof of the lemma is complete.
\end{proof}

It is clear from the relation \eqref{FT:4} that the entire function $A(z)=zE(z)+2E'(z)$ satisfies
\begin{equation}(iK_\parat)(x,0,t)=-i(x-2\parat)+iA_\parat(x+it),\end{equation}
where $A_\parat(z) = A(z-2\parat) - A(z+2\parat).$
From this, we see that the function $L_1$ in \eqref{def:L1} takes the form
\begin{align*}L_1(x)=\frac i{2\pi}\int_{-\infty}^\infty e^{ixt}\left(-i(x-2\parat)+iA_\parat(x+it)\right)\hat{\Phi}(t)\, dt.\end{align*}
We now define
\begin{equation}\label{def:M}M_\parat(x)=\frac 1 {2\pi}\int_{-\infty}^\infty e^{ixt}A_\parat(x+it)\hat{\Phi}(t)\, dt,\end{equation}
and note that
\begin{equation}\label{FT:10}L_1(x)=(x-2\parat)\Phi(x)-M_\parat(x).\end{equation}
Set
$$a_\parat(z) = a(z-2\parat) - a(z+2\parat),$$
where $a$ is given by \eqref{def:aA}.
\begin{lem} \label{FT:11} We have $M_\parat=\gamma *(\fii a_\parat).$
\end{lem}

\begin{proof} Using $A_\parat=\gamma*a_\parat$ and $\Phi=\gamma*\fii$ we compute
\begin{align*}2\pi M_\parat(x)&=\int_\R e^{ixt}\hat{\Phi}(t)\, dt\int_\R\gamma(x+it-u)a_\parat(u)\, du\\
&=\sqrt{2\pi}\int_\R \gamma(u)e^{ux}a_\parat(u)\, du\int_\R e^{ixt}\gamma(x+it)e^{iut}\hat{\Phi}(t)\, dt\\
&=\sqrt{2\pi}\,\gamma(x)\int_\R \gamma(u)e^{ux}a_\parat(u)\, du\int_\R e^{t^2/2+iut}\hat{\Phi}(t)\, dt\\
&=\sqrt{2\pi}\,\gamma(x)\int_\R \gamma(u)e^{ux}a_\parat(u)\, du\int_\R e^{iut}\hat{\fii}(t)\, dt\\
&=\sqrt{2\pi}\,\gamma(x)\cdot 2\pi\int_\R \gamma(u)e^{ux}a_\parat(u)\fii(u)\, du\\
&=2\pi\int_\R \gamma(x-u)a_\parat(u)\fii(u)=2\pi(\gamma *(\fii a_\parat ))(x).
\end{align*}
The proof of the lemma is complete.
\end{proof}

We finally define two auxiliary functions $N_\parat$ and $P_\parat$ by
\begin{align}\label{def:N}N_\parat(x)&=\frac 1 {(2\pi)^2}\iint_{\R^2} e^{itx} e^{2is\parat} E(x-2\parat + i(t-s))  \hat{\Phi}(t)\hat{G}(s)\, dsdt\\ \nonumber
&-\frac 1 {(2\pi)^2}\iint_{\R^2} e^{itx} e^{-2is\parat} E(x+2\parat + i(t-s))  \hat{\Phi}(t)\hat{G}(s)\, dsdt
;\\
\label{def:P}P_\parat(x)&=\frac 1 {(2\pi)^2}\iint_{\R^2} e^{ix(s+t)}e^{-st}E_\parat(x+it+is)\hat{\Phi}(t)\hat{G}(s)\, dsdt.
\end{align}
In view of the relation \eqref{FT:3}, we have
\begin{align*}L_2(x)&=-G(2\parat)\Phi(x)+G(x)\Phi(x)+N_\parat(x)-P_\parat(x).
\end{align*}

Now recall that Ward's first equation \eqref{Ward:8:1} takes the form $L= L_1+L_2=G\Phi-\Phi'$. By the formula
\eqref{FT:10} for $L_1$ and the above expression for $L_2$, Ward's first equation
is equivalent to
$$G\Phi-\Phi'=(\mu-2\parat) \Phi-M_\parat+G\Phi-G(2\parat)\cdot \Phi+N_\parat-P_\parat,\qquad (\left[\mu \Phi\right](x)=x\Phi(x)).$$
The last equation transforms to
\begin{equation}\label{Ward:9:1}\Phi'+\mu \Phi+c\Phi=M_\parat+P_\parat-N_\parat,\quad (c=-G(2\parat)-2\parat).\end{equation}
Recalling that
$\Phi=\gamma*\fii$, $G=\gamma*g$,
and using Lemma \ref{Gauss:rep:2}, we obtain the following result.
\begin{lem} \label{Ward:10:1} Ward's first equation \eqref{Ward:9:1} can be written
$$\gamma*\left[\mu \fii\right]+c\cdot \gamma *\fii=\gamma*\left[m_\parat+p_\parat-n_\parat\right]$$
where $M_\parat=\gamma *m_\parat$, $P_\parat=\gamma*p_\parat$, $N_\parat=\gamma*n_\parat$, and $c=-G(2\parat)-2\parat$.
\end{lem}

In order to apply the lemma, we need to solve the equations $P_\parat=\gamma*p_\parat$ and $N_\parat=\gamma*n_\parat$.
This is done in the next two lemmas.

\begin{lem} The function $P_\parat$ in \eqref{def:P} satisfies $P_\parat=\gamma*p_\parat$ where $$p_\parat=g\fii F_\parat, \quad F_\parat(z) = F(z-2\parat) - F(z+2\parat).$$
\end{lem}

\begin{proof} Write $a=s+t$ and
$$E(x+ia)=\frac 1 {2\pi}\int_\R e^{ixu}e^{-au}\hat{E}(u)\, du.$$
Then
\begin{align*}P_\parat(x)&=\frac 1 {(2\pi)^3}\iiint_{\R^3} e^{ixa}e^{-st}
\big(e^{i(x-2\parat)u} -e^{i(x+2\parat)u} \big)e^{-au}\hat{E}(u)\hat{\Phi}(t)\hat{G}(s)\, dudsdt\\
&=\frac 1 {(2\pi)^3}\iiint e^{ix(a+u)}e^{-st-au}  \big(e^{-2i\parat u} -e^{2i\parat u} \big) \hat{E}(u)
\hat{\Phi}(t)\hat{G}(s)\, dudsdt.
\end{align*}
Taking Fourier transform with respect to $x$ gives
\begin{align*}\hat{P}(\xi)&=\frac 1 {(2\pi)^2}\iiint \delta_\xi(a+u) e^{-st-au}\big(e^{-2i\parat u} -e^{2i\parat u} \big) \hat{E}(u)\hat{\Phi}(t)\hat{G}(s)\,dudsdt\\
&=\frac 1 {(2\pi)^2}\iint e^{-st-a(\xi-a)}\big(e^{-2i\parat (\xi-a)} -e^{2i\parat (\xi-a)} \big) \hat{E}(\xi-a)\hat{\Phi}(t)\hat{G}(s)\, dsdt.
\end{align*}
Since $E=\gamma*F$, the last expression equals
\begin{align*}\frac 1 {(2\pi)^2}\iint e^{-st-a(\xi-a)}\big(e^{-2i\parat (\xi-a)} -e^{2i\parat (\xi-a)} \big)\hat{\gamma}(\xi-a)\hat{F}(\xi-a)\hat{\gamma}(t)\hat{\fii}(t)\hat{\gamma}(s)\hat{g}(s)\, dsdt.
\end{align*}
But $\hat{\gamma}(\xi-a)=\hat{\gamma}(\xi)\hat{\gamma}(a)e^{a\xi}$ so the integrand is
$$e^{-st+a^2}\hat{\gamma}(\xi)\hat{\gamma}(a)\hat{F}_\parat(\xi-a)\hat{\gamma}(t)\hat{\fii}(t)\hat{\gamma}(s)\hat{g}(s)
=e^{-st}\hat{\gamma}(a)^{-1}\hat{\gamma}(\xi)\hat{F}_\parat(\xi-a)\hat{\gamma}(t)\hat{\fii}(t)\hat{\gamma}(s)\hat{g}(s),$$
and since $a=s+t$ and $e^{-st}\hat{\gamma}(s)\hat{\gamma}(t)=\hat{\gamma}(a)$ this simplifies to
$$\hat{\gamma}(\xi)\hat{F}_\parat(\xi-s-t)\hat{\fii}(t)\hat{g}(s)$$
We have shown that
$$\frac{\hat{P}_\parat(\xi)}{\hat{\gamma}(\xi)}=\frac 1 {(2\pi)^2}\iint \hat{F}_\parat(\xi-s-t)\hat{\fii}(t)\hat{g}(s)\, dsdt=\frac 1 {(2\pi)^2}\hat{F}_\parat*\hat{g}*\hat{\fii}(\xi).$$
Taking inverse Fourier transforms finishes the proof of the lemma.
\end{proof}

Let $\gamma_\parat(x) := \gamma(x-2\parat) - \gamma(x+2\parat).$

\begin{lem} The function $N_\parat$ in \eqref{def:N} satisfies $N_\parat=\gamma *n_\parat$ where
$$n_\parat=\fii \left[\1_{(-\infty,0)}*(\gamma_\parat g)\right].$$
\end{lem}

\begin{proof} By Fourier's inversion formula, we can write
$$N_\parat(x)=\frac 1 {(2\pi)^3}\iiint_{\R^3} e^{itx+iu(x+it-is)} \big(e^{2i(s-u)\parat}-e^{-2i(s-u)\parat}\big) \hat{E}(u)\hat{\Phi}(t)\hat{G}(s)\, dudtds.$$
Taking Fourier transform in $x$, using that $E=\gamma*F$, we get
\begin{align*}\hat{N}_\parat(\xi)&=
\frac 1 {(2\pi)^2}\iiint \delta_\xi(t+u)e^{u(s-t)} \big(e^{2i(s-u)\parat}-e^{-2i(s-u)\parat}\big) \hat{E}(u)\hat{\Phi}(t)\hat{G}(s)\, dudtds\\
&=\frac 1 {(2\pi)^2}\iint e^{(\xi-t)(s-t)} \big(e^{-2i(\xi-s-t)\parat}-e^{2i(\xi-s-t)\parat}\big) \hat{E}(\xi-t)\hat{\Phi}(t)\hat{G}(s)\, dtds.
\end{align*}
Since $\hat{E}(\xi-t)\hat{\Phi}(t)\hat{G}(s) = \hat{\gamma}(\xi)\hat{\gamma}(t)e^{\xi t}
\hat{F}(\xi-t)\hat{\gamma}(t)\hat{\fii}(t)\hat{G}(s),$
the last equation simplifies to
\begin{align*}
\frac{\hat{N}_\parat(\xi)}{{\hat{\gamma}(\xi)}}  = \frac1{(2\pi)^2}\iint e^{\xi s-st}  \big(e^{-2i(\xi-s-t)\parat}-e^{2i(\xi-s-t)\parat}\big) \hat{F}(\xi-t)\hat{\fii}(t)\hat{G}(s)\, dtds.
\end{align*}
We now make the observation that
\begin{align*}\frac 1 {2\pi}&\int e^{i\xi x} e^{\xi s} \big(e^{-2i(\xi-s-t)\parat}-e^{2i(\xi-s-t)\parat}\big)\hat{F}(\xi-t)\, d\xi \\
&=e^{it(x-is)}\frac 1 {2\pi}\int e^{iv(x-is)}\big(e^{-2i(v-s)\parat}-e^{2i(v-s)\parat}\big)\hat{F}(v)\, dv\\
&=e^{it(x-is)}\big(e^{2is\parat}F(x-2\parat-is)-e^{-2is\parat}F(x+2\parat-is)\big).\end{align*}
Hence,  defining $n_\parat(x)$ by
$\hat{n}_\parat(\xi)=\hat{N}_\parat(\xi)/\hat{\gamma}(\xi)$ and applying the inverse Fourier transform to $\hat{n}_\parat,$ we
find (since $\hat{\gamma}(t)=e^{-t^2/2}$ and $F=\1_{(-\infty,0)}*\gamma$)
\begin{align*}
2\pi n(x)&=\frac 1 {2\pi}\iint_{\R^2} e^{itx}\big(e^{2is\parat}F(x-2\parat-is)-e^{-2is\parat}F(x+2\parat-is)\big)\hat{\fii}(t)\hat{G}(s)\, dtds\\
&=\frac 1 {(2\pi)^{3/2}}\iint_{\R^2} e^{itx}\left[\int_{-\infty}^{\,0} e^{2is\parat} e^{-(x-2\parat-is+u)^2/2-s^2/2}\, du\right]
\hat{\fii}(t)\hat{g}(s)\, dtds\\
&-\frac 1 {(2\pi)^{3/2}}\iint_{\R^2} e^{itx}\left[\int_{-\infty}^{\,0}  e^{-2is\parat} e^{-(x+2\parat-is+u)^2/2-s^2/2}\, du\right]
\hat{\fii}(t)\hat{g}(s)\, dtds\\
&=\frac 1 {(2\pi)^{3/2}}\int_\R e^{itx}\hat{\fii}(t)\,dt\int_{-\infty}^{\,0} \big(e^{-(x+u-2\parat)^2/2}-e^{-(x+u+2\parat)^2/2} \big)  \int_\R e^{is(x+u)}\hat{g}(s)\, dsdu
\\
&=\int_{-\infty}^\infty e^{itx}\hat{\fii}(t)\,dt \frac 1 {\sqrt{2\pi}}\int_{-\infty}^{\,0} \big(e^{-(x+u-2\parat)^2/2}-e^{-(x+u+2\parat)^2/2} \big) g(x+u)\, du\\
&=2\pi  \fii(x) \1_{(-\infty,0)}*(\gamma_\parat g)(x).
\end{align*}
The proof of the lemma is complete.
\end{proof}

We now appeal to Ward's first equation (Lemma \ref{Ward:10:1})
$$\gamma*(\mu \fii+c\fii)=\gamma*(m_\parat+p_\parat-n_\parat),\qquad (c=-G(2\parat)-2\parat).$$
This is equivalent to
$$(x+c)\fii(x)=m_\parat(x)+p_\parat(x)-n_\parat(x),\quad (\text{for\, a.e.\,\,} x).$$
But by the preceding computations, $m_\parat=\fii a_\parat,$ $p=g\fii F_\parat$, and $n= \fii \left[\1_{(-\infty,0)}*(\gamma_\parat g)\right]$ so we obtain the equivalent equation
\begin{equation}\label{FT:13}x+c=a_\parat(x)+g(x)F_\parat(x)-\left[\1_{(-\infty,0)}*(\gamma_\parat g)\right](x)
\quad\text{when}\quad \fii(x)\ne 0.\end{equation}

Before proceeding, note that since the distributional derivative
$\1_{(-\infty,0)}'=-\delta$, we have
\begin{equation*}\left[\1_{(-\infty,0)} *(\gamma_\parat g)\right]'=-\gamma_\parat g.\end{equation*}
Thus differentiating in \eqref{FT:13}, recalling that $a_\parat'=F_\parat$, we arrive at the equation
$$1=F_\parat+(gF_\parat)'+\gamma_\parat g.$$
Since $g'=\fii-1$ and $F_\parat'=-\gamma_\parat$ we finally arrive at
$$1=F_\parat+(\fii-1)F_\parat-\gamma_\parat g+\gamma_\parat g=\fii F_\parat,\qquad \text{a.e. \, on}\quad \{\fii\ne 0\}.$$
This means that $\fii=1/F_\parat$ whenever $\fii\ne 0$, so Ward's first equation is equivalent to that (almost everywhere)
$$\fii=\frac {\1_e} {F_\parat}$$
where $e$ is a Borel subset of $\R$. We can here clearly assume that $e$ be closed.

We now claim that $e$ is some interval of positive measure.
To show this, we first rewrite \eqref{FT:13} as
$$x + C = a_\parat(x)+g(x)F_\parat(x) + \int_0^x \gamma_\parat(t) g(t)\,dt
\quad\text{on}\quad e.$$
Here $C$ is some constant.
By means of integration by part, up to an additive constant
$$x = a_\parat(x)+\int_0^x F_\parat(t) g'(t)\,dt
\quad\text{on}\quad e.$$
Since $g'=\fii-1$ and $a_\parat'=F_\parat,$
$$x = \int_0^x F_\parat(t) \fii(t)\,dt = \int_0^x \1_e(t)\,dt \quad\text{on}\quad e.$$
Thus $e$ is connected.

We have proved that Ward's equation is satisfied if and only if
\begin{equation*}\Phi(z)=\gamma*\frac {\1_I} {F_\parat}(z)=\frac 1 {\sqrt{2\pi}}\int_I \frac {e^{-(z-t)^2/2}}{F_\parat(t)}\, dt,\end{equation*}
where $I$ is some interval of positive measure.

The proof of Theorem \ref{THV} is finished.
q.e.d.

\subsection{The mass-one theorem}

We now finally prove the mass-one theorem (Theorem \ref{THVI}).

Suppose that $\Phi=\gamma*\fii$ is an error-type function satisfying the mass-one equation in $\Sigma_\parat$, i.e.,
\begin{equation}\label{moq1}\begin{split}\Phi(x)&=\int_{\Sigma_\parat}e^{-|x/2-w|^2}|\Phi(x/2+w)|^2\, dA(w)\\
&=
\int_{\Sigma_\parat-x/2}e^{-|w|^2}\babs{\,\Phi(w+x)\,}^{\,2}\, dA(w),\qquad (-2\parat<x<2\parat).\\
\end{split}
\end{equation}

Consider the Fourier transform $\hat{\Phi}=\hat{\gamma}\hat{\fii}$ (as in Section
\ref{Fourier:sec}) and apply Fourier's inversion formula:
$$\Phi(x+u)=\frac 1 {2\pi}\int_\R e^{is(x+u)}\hat{\Phi}(s)\, ds.$$
The equation \eqref{moq1} then becomes
\begin{equation}\label{ttf}
\Phi(x)=\frac 1 {(2\pi)^2}\iint_{\R^2}e^{i(t+s)x}\hat{\Phi}(s)\hat{\Phi}(t)\, dsdt\int_{\Sigma_\parat-x/2}
e^{-\babs{w}^2}e^{iwt}e^{i\bar{w}s}\, dA(w).
\end{equation}

\begin{lem} \label{l1} We have
$$\int_{\Sigma_\parat-x/2}e^{-|w|^2}e^{iwt}e^{i\bar{w}s}\, dA(w)=e^{-st}E_\parat(x+it+is).$$
\end{lem}

(Here $E_\parat(z)=E(z-2\parat)-E(z+2\parat)$, see \eqref{etau}.)

\begin{proof} We shall first compute the integral
$$I(\parat)=\int_{\L+\parat-x/2}e^{-|w|^2}e^{iwt}e^{i\bar{w}s}\, dA(w)$$
where $\L=\{z;\, \re z<0\}$.
We obtain
\begin{align*}I(\parat)&=\frac 1 \pi\int_{-\infty}^{\parat-x/2}e^{-u^2+iu(t+s)}\, du\int_{-\infty}^{+\infty}
e^{-v^2-v(t-s)}\, dv\\
&=\frac 1 \pi e^{-(t+s)^2/4}\int_{-\infty}^{\parat-x/2}e^{-(u-i(t+s)/2)^2}\, du
\,e^{(t-s)^2/4}\int_{-\infty}^{+\infty}e^{-(v+(t-s)/2)^2}\, dv\\
&=\frac 1 {\sqrt{\pi}}e^{-(s^2+t^2)/2}\int_{-\infty}^{\parat-x/2}e^{-(u-i(t+s)/2)^2}\, du\\
&=e^{-st}\frac 1 {\sqrt{2\pi}}\int_{-\infty}^{(-2\parat+x+i(t+s))/\sqrt{2}}e^{-z^2/2}\, dz=e^{-st}E(-2\parat+x+is+it).
\end{align*}
Finally,
$$\int_{\Sigma_\parat-x/2}e^{-|w|^2}e^{iwt}e^{i\bar{w}s}\, dA(w)=I(\parat)-I(-\parat).$$
The proof of the lemma is finished. \end{proof}

It follows from the lemma that the mass-one equation is equivalent to that
\begin{equation}\label{tts}\Phi(x)=\frac 1 {(2\pi)^2}\iint_{\R^2}e^{ix(t+s)}e^{-st}E_\parat(x+it+is)
\hat{\Phi}(t)\hat{\Phi}(s)\, dtds.\end{equation}

\begin{lem} The function $\Phi=\gamma*\fii$ satisfies the mass-one equation if and
only if $\fii=\fii^2F$.
\end{lem}

\begin{proof} If $a=s+t$, then
$$E_\parat(x+ia)=\frac 1 {2\pi}\int_\R e^{ixu}e^{-au}\hat{E}_\parat(u)\, du,$$
so the mass-one equation \eqref{tts} means that
$$\Phi(x)=\frac 1 {(2\pi)^3}\iiint_{\R^3}e^{ix(a+u)-au-st}\hat{E}_\parat(u)\hat{\Phi}(s)\hat{\Phi}(t)\, dudsdt.$$
Taking the Fourier transform with respect to $x$, we obtain the equivalent equation
\begin{align*}\hat{\Phi}(\xi)&=\frac 1 {(2\pi)^2}\iiint_{\R^3} \delta_\xi(a+u)e^{-au}\hat{E}_\parat(u)\, du\, e^{-st}\, dsdt\\
&=\frac 1 {(2\pi)^2}\iint_{\R^2} e^{-a(\xi-a)}\hat{E}_\parat(\xi-a)e^{-st}\hat{\Phi}(s)\hat{\Phi}(t)\, dsdt.
\end{align*}
Recalling that $E_\parat=\gamma *F_\parat$ and $\Phi=\gamma*\fii$ this transforms to
\begin{equation}\label{prel}\hat{\fii}(\xi)\hat{\gamma}(\xi)=\frac 1 {(2\pi)^2}\iint_{\R^2}e^{-a(\xi-a)}\hat{\gamma}(\xi-a)\hat{F}_\parat(\xi-a)e^{-st}
\hat{\gamma}(s)\hat{\fii}(s)
\hat{\gamma}(t)\hat{\fii}(t)\, dsdt.\end{equation}
Using that $\hat{\gamma}(\xi-a)=\hat{\gamma}(\xi)\hat{\gamma}(a)e^{a\xi}$ and $\hat{\gamma}(a)=\hat{\gamma}(s)\hat{\gamma}(t)e^{-st}$ (since $a=s+t$), we find
\begin{align*}e^{-a(\xi-a)}\hat{\gamma}(\xi-a)e^{-st}\hat{\gamma}(s)\hat{\gamma}(t)=e^{a^2}\hat{\gamma}(\xi)
\hat{\gamma}(a)^2=\hat{\gamma}(\xi),\end{align*}
so \eqref{prel} is equivalent to that
$$\hat{\fii}(\xi)=\frac 1 {(2\pi)^2}\iint_{\R^2} \hat{F}_\parat(\xi-s-t)\hat{\fii}(s)\hat{\fii}(t)\, dsdt=\frac 1 {(2\pi)^2} \hat{F}_\parat*\hat{\fii}*\hat{\fii}(\xi).$$
Taking inverse Fourier transforms, this gives $\fii=\fii^2 F_\parat$.
\end{proof}

\begin{proof}[Proof of Theorem \ref{THVI}] A real-valued Borel function $\fii$ satisfies $\fii=\fii^2 F_\parat$ if and only if $\fii=\1_e/F_\parat$ for a Borel set $e\subset\R$.
Hence Theorem \ref{THVI} is a consequence of the preceding lemma.
\end{proof}

\subsection{The case of a regular boundary point.} \label{ssu} If in our above proofs of Theorem \ref{THV} and Theorem \ref{THVI} we replace the functions $E_\parat$ and $F_\parat$ by $E$ and $F$ respectively,
we obtain a proof
of Theorem \ref{TH8}. q.e.d.

\end{document}